\newtheorem{theorem}{Theorem}
\newtheorem*{theorem*}{Theorem}
\newtheorem{proposition}{Proposition}[section]
\newtheorem{lemma}[proposition]{Lemma}
\theoremstyle{definition}
\newtheorem{definition}[proposition]{Definition}
\numberwithin{equation}{section}
\crefname{enumi}{}{}
\crefname{equation}{}{}
\def\@tocline#1#2#3#4#5#6#7{\relax
\ifnum#1>\c@tocdepth
\else
\par \addpenalty\@secpenalty\addvspace{#2}%
\begingroup \hyphenpenalty\@M{}
\@ifempty{#4}{%
\@tempdima\csname r@tocindent\number#1\endcsname\relax
}{%
\@tempdima#4\relax
}%
\parindent\z@ \leftskip#3\relax \advance\leftskip\@tempdima\relax
\rightskip\@pnumwidth{} plus4em \parfillskip-\@pnumwidth{}
#5\leavevmode\hskip-\@tempdima{}
\ifcase{#1}
\or\or{}\hskip 1em \or{}\hskip 2em \else \hskip 3em \fi%
#6\nobreak\relax
\dotfill\hbox{} to\@pnumwidth{\@tocpagenum{#7}}\par
\nobreak{}
\endgroup
\fi}
\def\R {\mathbb{R}}
\def\E {\mathbb{E}}
\def\osc{\operatorname{osc}}
\renewcommand{\tilde}{\widetilde}
\newcommand{\eps}{\varepsilon}
\newcommand{\pa}{\partial}
\renewcommand{\fint}{\strokedint}
\newcommand{\capacity}[2]{
  \operatorname{cap}#1#2
}
\begin{document}

\title[A note on the capacity estimate in metastability]{A note on the capacity estimate in metastability for generic configurations}
\author[Avelin]{Benny Avelin}
\address{Benny Avelin,
Department of Mathematics,
Uppsala University,
S-751 06 Uppsala,
Sweden}
\email{\color{blue} benny.avelin@math.uu.se}
\author[Julin]{Vesa Julin}
\address{Vesa Julin,
Department of Mathematics and Statistics,
University of Jyv\"askyl\"a,
P.O. Box 35,
40014 Jyv\"askyl\"a,
Finland}
\email{\color{blue} vesa.julin@jyu.fi}

\keywords{
Eyring-Kramers formula;
Kirchhoff's law;
Metastability;
Capacity;
Fokker-Planck;
Electrical network
}

\subjclass{
Primary: 31B15, Secondary: 35Q84
}
\date{\today}

\begin{abstract}
  In this paper we further develop the ideas from Geometric Function Theory initially introduced in~\cite{BeLaVe}, to derive capacity estimate in metastability for arbitrary configurations. The novelty of this paper is twofold. First, the graph theoretical connection enables us to exactly compute the pre-factor in the capacity. Second, we complete the method from~\cite{BeLaVe} by providing an upper bound using Geometric Function Theory together with Thompson's principle, avoiding explicit constructions of test functions.
\end{abstract}

\maketitle
\setcounter{tocdepth}{2}

\section{Introduction}

In this paper we continue the study of the capacity estimate from~\cite{BeLaVe}, where we introduce a geometric characterization of the Eyring-Kramers formula. To introduce our setting, we begin by considering the Kolmogorov process
\begin{align*}
  d X_t = -\nabla F(X_t) dt + \sqrt{2\eps} dB_t
\end{align*}
where $F$ is a non-convex potential  and $\eps$ is a small positive number. A formula for the expected transition time from one local minimum point to another was proposed independently by Eyring~\cite{Eyr} and Kramers~\cite{Kra} in the context of metastability of chemical processes, and can be stated as follows. Assume that $x$ and $y$ are quadratic local minima of $F$, separated by a unique saddle $z$ which is such that the Hessian has a single negative eigenvalue $\lambda_1(z)$. Then the expected transition time from $x$ to $y$ satisfies
\begin{align*}
  \E^x[\tau] \simeq \frac{2\pi}{|\lambda_1(z)|} \sqrt{\frac{|\det(\nabla^2 F(z))|}{\det(\nabla^2 F(x))}} e^{(F(z)-F(x))/\eps},
\end{align*}
where $\simeq$ denotes that the comparison constant tends to $1$ as $\eps \to 0$.
The validity of the above formula has been studied extensively, references can be found in for instance~\cite{BeLaVe,BGKBook,FW, LMS}. The first rigorous proof of the Eyring-Kramers formula above, is by~\cite{BGK} using  potential theory and this approach has turned out to be fruitful.

Our main motivation to study this phenomenon, comes from non-convex optimization, for instance, optimization of neural networks. In this setting, the minima/saddles are in general degenerate and/or non-smooth.

In~\cite{BeLaVe} we use the potential theoretic formulation and extend the results of~\cite{BGK,BerGentz} to more general cases, which in particular includes non-smooth critical points.
As in~\cite{BGK}, the main technical issue is to provide sharp capacity estimates and the main result in~\cite{BeLaVe} is a geometric characterization of Newtonian capacity w.r.t.~the measure $e^{-F(x)/\eps} dx$ inspired by the corresponding characterization for conformal capacity originally proved by Gehring,~\cite{G}. In~\cite{BeLaVe} we observe that the capacity depends on the  configuration of the saddle points which connect the two local minima, but we computed the capacity only in the simple cases when the saddles are either parallel or in series, see \cref{f:par:ser}. However, for an arbitrary smooth potential  the situation can be more complex and the configuration of the saddle points can be a combination of both parallel and series cases  with essentially arbitrary complexity.

\begin{figure}
  \begin{center}
    \begin{minipage}{\textwidth}
      \raisebox{-0.45\height}{
      \begin{tikzpicture}[scale=2]

        \filldraw (-1,0) circle (0.05);
        \draw[dashed] (-1,0) circle (0.2);
        \draw (-1,-0.3) node {$x_u$};
        \filldraw (1,0) circle (0.05);
        \draw[dashed] (1,0) circle (0.2);
        \draw (1,-0.3) node {$x_w$};

        \filldraw[color=gray!60] (0,1) circle (0.05);
        \draw (0,1.2) node {$z_1$};
        \filldraw[color=gray!60] (0,0.2) circle (0.05);
        \draw (0,0.4) node {$z_2$};
        \filldraw[color=gray!60] (0,-1) circle (0.05);
        \draw (0,-1+0.2) node {$z_3$};

        \draw[->,color=red] (-1,0) to[out=45,in=-180] (0,1) to[out=0,in=-180-45] (1,0);
        \draw[->,color=red] (-1,0) to[out=10,in=-180] (0,0.2) to[out=0,in=-180-10] (1,0);
        \draw[->,color=red] (-1,0) to[out=-45,in=-180] (0,-1) to[out=0,in=-180+45] (1,0);

        \draw[red,rotate around={0:(0,0)}] (-0.1,1+0.1) rectangle (0.1,1-0.1);
        \draw[red,rotate around={0:(0,0)}] (-0.1,0.2+0.1) rectangle (0.1,0.2-0.1);
        \draw[red,rotate around={0:(0,0)}] (-0.1,-1+0.1) rectangle (0.1,-1-0.1);

      \end{tikzpicture}}
      \raisebox{-0.5\height}{
      \begin{tikzpicture}[scale=2]

        \filldraw (-1,0) circle (0.05);
        \draw[dashed] (-1,0) circle (0.2);
        \draw (-1,-0.3) node {$x_u$};
        \filldraw (0.5,0.2) circle (0.05);
        \draw[dashed] (0.5,0.2) circle (0.2);
        \draw (0.5,0.2-0.3) node {$x_1$};
        \filldraw (2,0) circle (0.05);
        \draw[dashed] (2,0) circle (0.2);
        \draw (2,-0.3) node {$x_w$};

        \filldraw[color=gray!60] (-0.25,0.1) circle (0.05);
        \draw (-0.25,0.1+0.2) node {$z_1$};
        \filldraw[color=gray!60] (0.5+0.75,0.1) circle (0.05);
        \draw (0.5+0.75,0.1+0.2) node {$z_2$};
        \draw[->,color=red] (-1,0) to[out=10,in=-180+10] (-0.25,0.1) to[out=10,in=-180] (0.5,0.2) to[out=0,in=-180-10] (0.5+0.75,0.1) to[out=-10,in=-180-10] (2,0);

        \draw[red,rotate around={0:(0,0)}] (-0.25-0.1,0.1-0.1) rectangle (-0.25+0.1,0.1+0.1);
        \draw[red,rotate around={0:(0,0)}] (0.5+0.75-0.1,0.1-0.1) rectangle (0.5+0.75+0.1,0.1+0.1);
      \end{tikzpicture}}
    \end{minipage}
  \end{center}
  \caption{Left picture is the parallel case and the right is the series case, $x_u,x_w$ are local minimum points and $z_i$ are saddle points.}\label{f:par:ser}
\end{figure}
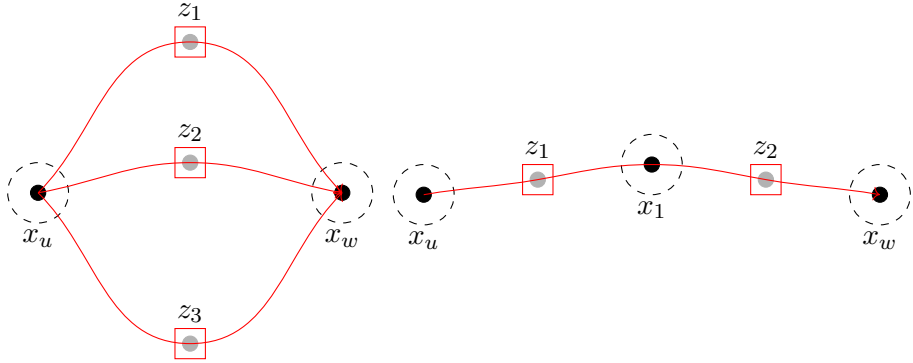

The novelty in~\cite{BeLaVe} was the use of Geometric Function Theory to provide a lower bound for the capacity. In this paper we complete this method by providing an upper bound using Geometric Function Theory together with Thompson's principle, see the proof of the upper bound in \cref{sec:mainthm}, Proof of \cref{mainthm}.
Our goal is to extend the capacity estimate from~\cite{BeLaVe} to the case of arbitrary configurations of critical points.
We do this by discretizing the problem where the `valleys'/`islands' around the local minimum points are the vertices and the regions around the saddle points which we call `bridges' are the edges. The local capacity of a bridge can be geometrically characterized using the results from~\cite{BeLaVe} and this defines the weights of the edges, thus turning the problem into a capacitary problem on a graph. Connecting problems of this type to graphs is similar to~\cite{Michel}, however, we did not find this particular problem in the literature. We note that the result in~\cite{BGK} covers only the case of the parallel configuration, see \cref{f:par:ser}.
Moreover, the framework of geometric function theory (see~\cite{BeLaVe}) makes this construction straightforward and natural.

The capacitary problem on the graph is equivalent to the notion of an electrical network, which was originally defined by Kirchhoff in the 1840s in his elegant solution to the problem of replacement resistance for a network of resistors~\cite{Kirchhoff}.
For a modern presentation of electrical networks and its connection to Markov chains and Kirchhoff's theorem, we refer to~\cite{Bollobas,Levin, Wagner}.

\subsection{Assumptions and definitions}\label{sec:ass}

In order to state our main results we first need to introduce our assumptions on the potential $F$. We remark that our assumptions cover the case where $F$ is a Morse function as defined in~\cite[Assumption 10.3]{BGKBook}, i.e.~a $C^2$ function in which all critical points are non-degenerate (non-degenerate Hessian with at most one negative eigenvalue). We further remark that our assumptions cover the degenerate case studied in~\cite{BerGentz}, but we also allow for non-smooth (Lipschitz) potentials.

Let us first introduce some general terminology. 
Recall that a Lipschitz function $h:\R \to \R$ has a \emph{critical point} at $t$, if $0$ is in the generalized gradient of $h$ at $t$ in the following sense,
\begin{align*}
  \limsup_{s \to t\pm} \frac{f(s)-f(t)}{s-t} \geq 0 
  \quad \text{or} \quad
  \liminf_{s \to t\pm} \frac{f(s)-f(t)}{s-t} \leq 0,
\end{align*}
where in the above we mean that both the left and the right limit satisfies the conditions.
We say that a point $z$ of a Lipschitz function $f:\R^n \to \R$ is a \emph{critical point}, if for every $e \in \R^n$, $\|e\|=1$, the function $h_e(t)=f(z+te)$ has a critical point at $0$.

Given a continuous function $f:\R^n \to \R$, we say that  a local minimum of $f$ at  $z$  is \emph{proper} if there exists a $\hat \delta > 0$ such that for every  $0 < \delta < \hat \delta$ there is a $\rho$ such that
\begin{equation*}
  f(x) \geq 
  \begin{cases}
    f(z), & x \in B_{\rho}(z), \\
    f(z) + \delta, & x \in \pa B_{\rho}(z),
  \end{cases}
\end{equation*}
where $B_\rho(z)$ denotes an open ball with radius $\rho$ centered at $z$ (proper maximum is defined analogously). When the center is at the origin we use the short notation $B_\rho$. We say that a critical point $z$ of $f$ is a \emph{saddle point} if it is not a proper local minimum nor maximum point.


Let us then proceed to our assumption on the potential.

\begin{definition}\label{def:admissible}
  Let $F \in C^{0,1}(\R^n)$ satisfy the following quadratic growth  condition
  \begin{equation*}
    F(x) \geq \frac{ |x|^2}{C_0}  - C_0
  \end{equation*}
  for a  constant $C_0 \geq 1$. We assume that every local minimum point $z$ of $F$ is proper.

  We say that $F$ is \emph{admissible} if for every saddle point $z \in \R^n$ of $F$ there are convex functions $g_z: \R \to \R$ and $G_z :\R^{n-1} \to \R$ which have proper minimum at $0$, such that $g_z(0) = G_z(0) = 0$, and  an isometry\footnote{Recall that a mapping $T$ is an isometry if $|T(x) - T(y)| = |x-y|$. In $\R^n$, this implies that $T(x) = Ax + b$, where $A$ is an orthogonal matrix. That is, $T$ consists of translation $b$ and a rotation $A$.} $T_z : \R^n \to \R^n$  such that, denoting $x = (x_1, x') \in \R \times \R^{n-1}$, it holds
  \begin{equation}\label{eq:struc3} 
    \big| (F\circ T_z) (x) -F(z)  + g_z(x_1) - G_z(x')\big|\leq \omega( g_z(x_1))  + \omega( G_z(x')),
  \end{equation}
  where $\omega : [0,\infty) \to [0,\infty)$ is a continuous and increasing function  with $\lim_{s \to 0} \frac{\omega(s)}{s} = 0$.
\end{definition}

The assumption \cref{eq:struc3} allows the saddle point to be degenerate, but we do not allow branching saddles, in the sense that $\{f(x) < f(z)\} \cap B_{\rho}(z)$ can have at most two components for small $\rho$.  Note that the convex functions $g_z, G_z$ and the isometry $T_z$ depend on $z$, while the function $\omega$ is the same for all saddle points. As such, we denote by  $\delta_0$ the largest number  for which $\omega(\delta) \leq \frac{\delta}{100}$ for all $\delta \leq 4 \delta_0$. 

\begin{definition}\label{def:bridges}
  Let $F \in C^{0,1}(\R^n)$ be admissible, then for every saddle point $z$ and $\delta > 0$, we define the \emph{bridge} at $z$ as
  \begin{equation*}
    O_{z,\delta} := T_z\left(  \{x_1 \in \R:  g_z(x_1) < \delta\} \times \{x' \in \R^{n-1}:  G_z(x') < \delta\}\right),
  \end{equation*}
  where $T_z$ is the isometry from \cref{def:admissible}. See \cref{f:localization}.
\end{definition}

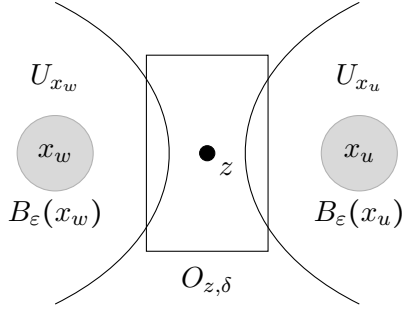
\begin{figure}
  \begin{center}
    \begin{tikzpicture}

      \draw (-2,-2) .. controls (0,-1) and (0,1) .. (-2,2);
      \draw (-2,1) node {$U_{x_w}$};
      \draw (2,-2) .. controls (0,-1) and (0,1) .. (2,2);
      \draw (2,1) node {$U_{x_u}$};

      \filldraw (0,0) circle (0.1);
      \draw (0.25,-0.2) node {$z$};

      \filldraw[color=gray!60, fill=gray!30] (-2,0) circle (0.5);
      \draw (-2,0) node {$x_w$};
      \draw (-2,-0.8) node {$B_\varepsilon(x_w)$};
      \filldraw[color=gray!60, fill=gray!30] (2,0) circle (0.5);
      \draw (2,0) node {$x_u$};
      \draw (2,-0.8) node {$B_\varepsilon(x_u)$};

      \draw (-0.8,-1.3) rectangle (0.8,1.3);
      \draw (0,-1.7) node {$O_{z, \delta}$};
    \end{tikzpicture}
  \end{center}
  \caption{The neighborhood $O_{z, \delta}$ of the saddle point $z$ (bridge) connects the sets $U_{x_u}$ and $U_{x_w}$, components of $\{F < F(z)-\delta/3\}$. }\label{f:localization}
\end{figure}

Note that, since the saddle may be flat, we should talk about sets rather than points. However, we adopt the convention that we always choose a representative point from each saddle (set) and thus we may label the saddles by points $z_1, z_2, \dots$. Moreover, we assume that there is a $\delta_1 \leq \delta_0$ such that for $\delta < \delta_1$ we have that if $z_1$ and $z_2$ are two different saddle points, then their neighborhoods $O_{z_1, 3\delta}$ and   $O_{z_2, 3\delta}$ defined in \cref{def:bridges} are disjoint.  
Furthermore, we also assume that $\eps_0$ is small enough that for any local minimum point $x$, the ball $B_{\eps_0}(x)$ does not intersect any bridge or any $\eps_0$ ball around any other local minimum.

We use the definitions of a geodesic length and a minimal cut originally defined in~\cite{BeLaVe}, inspired by~\cite{G}.
\begin{definition}\label{def:geodesic_cut}
  Let $A,B \subset \Omega \subset \R^n$ where $\Omega$ is a domain and $A \cap B = \emptyset$. We denote the \emph{curve family}
  \begin{align*}
    \mathcal{C}(A,B; \Omega) := \{\gamma: \gamma \in C^{1}([0,1];\Omega), \gamma(0) \in A, \gamma(1) \in B\}
  \end{align*}
  and  the family of \emph{separating sets} as $\mathcal{S}(A,B;\Omega)$, where  a smooth hypersurface $S \subset \R^n$ (possibly with boundary) is in $\mathcal{S}(A,B;\Omega)$  if  every
  $\gamma \in \mathcal{C}(A,B; \Omega)$  intersects $S$. We define the \emph{geodesic distance} between $A$ and $B$ in $\Omega$ as
  \begin{equation*}
    d_{\eps}(A,B; \Omega) := \inf\left( \int_{\gamma} |\gamma'| e^{\frac{F(\gamma)}{\eps}} \, dt  : \gamma \in \mathcal{C}(A,B; \Omega)  \right)
  \end{equation*}
  and   the \emph{minimal cut}  by
  \begin{equation*}
    V_{\eps}(A,B; \Omega) := \inf\left( \int_{S}  e^{-\frac{F(x)}{\eps}} \, d \mathcal{H}^{n-1}(x) :S \in  \mathcal{S}(A,B;\Omega) \right) .
  \end{equation*}
\end{definition}

We define some topological quantities.
\begin{definition}\label{def:islands}
  Let $x_u,x_w$ be two local minima of an admissible $F$. The \emph{communication height} between $x_u,x_w$ is defined as
  \begin{equation*}
    F(x_u; x_w) =  \inf_{\gamma \in \mathcal{C}(B_\eps(x_u),B_\eps(x_w); \R^n)} \sup_{t \in [0,1]}  \,  F(\gamma(t)).
  \end{equation*}
  Fixing $\delta < \delta_1$, we denote the component of the sub-levelset  $\{ F < F(x_u; x_w) + \delta/3\}$ which contains the points $x_u$ and $x_w$ by $U_{\delta/3}$, and we denote
  \begin{equation}
    \label{def:U-delta}
    U_{-\delta/3} := \{ F < F(x_u; x_w) - \delta/3\} \cap U_{\delta/3}.
  \end{equation}
  Furthermore, we remark that $F(x_u;x_w)$ does not depend on $\eps$ if $\eps < \eps_0$.
  We call the components of $U_{-\delta/3}$ \emph{islands}. { For each island $U$ we select a proper minimum point $x$ satisfying $F(x) = \min_U F$, and we will in the following denote $U_x$ as the island which contains $x$, see \cref{f:localization}}. We denote all saddle points in $U_{\delta/3} \setminus U_{-\delta/3}$ by $Z$.
\end{definition}

Finally we recall that the capacity of two disjoint  sets $A, B$ is defined as
\begin{align*}
  \capacity(A,B) = \inf \left(   \eps\int_{\R^n } |\nabla h|^2 e^{-\frac{F}{\eps}}\, dx \, : \,\, h=1 \text{ in } A, \, h \in W_0^{1,2}(\R^n \setminus B)\right).
\end{align*}

\subsection{Construction of the electrical network}\label{ss:construction}
\begin{definition}
  An \emph{electrical network} is a pair $(G,\bm y)$, where $G = (V,E)$ is a graph, where $V$ are the \emph{vertices} and $E$ are the \emph{edges}, the vector { $\bm y \in \R^{|E|}$} is called the \emph{admittances}.
\end{definition}

We will now construct an electrical network based on the islands and bridges from \cref{def:islands,def:bridges}. We associate the \emph{vertices} with the islands and for every vertex $v$ we denote the corresponding island by $U_v$.
The set of all vertices is $V$.
Furthermore, we associate the \emph{edges} with the bridges from \cref{def:bridges}, specifically, for every saddle point $z \in Z$ in \cref{def:islands} we associate the edge $e_z$ with the bridge { $O_{e_z} = O_{z, \delta}$}. The set of all edges is $E$,  vice versa we associate with $e \in E$ the corresponding saddle point $z_e \in Z$.
We say that vertices $v,v' \in V$ are \emph{incident} with an edge $e$, and vice versa,  if they are the ends of  the edge, or in other words,  the associated islands $U_v,U_v'$ intersect the  bridge $O_e$ (there are at most two since $F$ is admissible).  An edge which is incident with only one vertex is called a \emph{loop}. We also define a \emph{cycle} of the graph $G$ to be any non-trivial closed path for which only the first and last vertices are equal.

We thus have a graph $G = (V,E)$, and we orient it arbitrarily (i.e.~we orient each edge of $G$ arbitrarily by assigning an arrow on it pointing towards one of its two ends). In order to have an electrical network we need to define \emph{admittance} ${\bm y}_e$ for $e \in E$. Now, let $e \in E$, which is not a loop, and let $v_{-}, v_+ \in V$ be its incident vertices. Define the connected set $\Omega_{e} =  O_{z_e,\delta} \cup U_{v_-} \cup U_{v_+}$ and the admittance
\begin{equation}
  \label{eq:geometric}
  {\bm y}_e   :=   \eps \frac{V_{\eps}(B_\eps(x_{v_-}),B_\eps(x_{v_+}); \Omega_{e}) }{d_{\eps}(B_\eps(x_{v_-}),B_\eps(x_{v_+}); \Omega_{e}) } e^{\frac{F(z_e)}{\eps}}.
\end{equation}

From the geometric characterization of capacity in~\cite{BeLaVe}, we see that the admittance of the edge $e$ is the pre-factor of the capacity of $(B_{\eps}(x_{v_-}),B_{\eps}(x_{v_+}))$ in $\Omega_{e}$.
If $e$ is a loop we set ${\bm y}_e = 0$. We have thus constructed our  electrical network $(G,\bm y)$ which consists of the graph $G$ and the admittance vector $\bm y \in \R^{|E|}$.

\subsection{The Main Result}\label{ss:main}

We begin with some notation and recalling some results from~\cite{BeLaVe}.

For functions $f$ and $g$, which depend  continuously on $\eps>0$, we adopt the notation
\begin{equation*}
  f(\eps) \simeq g(\eps)
\end{equation*}
when there exists a constant $C$ depending only on the data of the problem such that
\begin{equation*}
  (1- C \hat \eta(\eps)) f(\eps) \leq g(\eps) \leq (1+C \hat \eta(\eps)) f(\eps),
\end{equation*}
where $\hat \eta(\cdot)$ is an increasing and continuous function $\hat \eta(\cdot):[0,\infty) \to [0,\infty)$ with $\lim_{s \to 0} \hat \eta(\cdot) = 0$. 
We remark that in the following the function $\hat \eta$ is the one from \cref{lem:geometric-d} and \cref{lem:convex}. For us the explicit form will not be important but can be found in~\cite{BeLaVe} and we merely note that $\hat \eta$ is sublinear and depends on the Lipschitz constant of $F$ inside $U_{\delta}$, the dimension and the function $\omega$ from \cref{eq:struc3}. 

We need the above notation in order to relate the geodesic distance and the minimal cut from \cref{def:geodesic_cut} to the convex functions $g_z,G_z$ from \cref{def:bridges}, which is stated in the following proposition (for the proof see~\cite[Proposition 4.1--4.2]{BeLaVe}):

\begin{proposition}\label{lem:geometric-d}
  Let $v_-,v_+ \in V$ be incident vertices connected with edge $e \in E$ and let $0 < \eps < \eps_0$. Denote $x_{v_-},x_{v_+}$ the corresponding proper local minimum points and let $z_e$ be the corresponding saddle, then
  \begin{equation*}
    d_{\eps}(B_\eps(x_{v_-}),B_\eps(x_{v_+}); \Omega_e) \simeq e^{\frac{F(z_e)}{\eps}}  \int_{\R} e^{-\frac{g_{z_e}(y_1)}{\eps}} \, dy_1 ,
  \end{equation*}
 and
  \begin{equation*}
    V_{\eps}(B_\eps(x_v),B_\eps(x_{v'});\Omega_e) \simeq e^{-\frac{F(z_e)}{\eps}}  \int_{\R^{n-1}} e^{-\frac{G_{z_e}(y')}{\eps}} \, dy' ,
  \end{equation*}
  where $g_{z_e},G_{z_e}$ are the functions in \cref{def:bridges}.
\end{proposition}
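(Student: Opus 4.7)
The plan is to work in the isometric coordinates $T_{z_e}$ of \cref{def:admissible}, in which the admissibility estimate \cref{eq:struc3} provides the two-sided bound
\begin{equation*}
  \big|(F\circ T_{z_e})(x_1,x') - F(z_e) + g_{z_e}(x_1) - G_{z_e}(x')\big| \le \omega(g_{z_e}(x_1)) + \omega(G_{z_e}(x')).
\end{equation*}
Both estimates split into a contribution from inside the bridge $O_{z_e,\delta}$ and one from $\Omega_e\setminus O_{z_e,\delta}$. Since on the latter $F<F(z_e)-\delta/3$, any integral taken there is smaller than the main term by a factor $O(e^{-\delta/(3\eps)})$, which is absorbed into $\hat\eta(\eps)$. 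The computation therefore reduces to the bridge, where the $x_1$ and $x'$ variables essentially decouple.

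For the geodesic distance, the upper bound is produced by the explicit competitor obtained by concatenating the axial segment $t\mapsto T_{z_e}(t,0')$ for $t\in[-L,L]$ with $g_{z_e}(\pm L)>\delta$, with fixed bounded Lipschitz paths from $T_{z_e}(\pm L,0')$ into $B_\eps(x_{v_\pm})$ inside the islands. On the segment $G_{z_e}\equiv 0$, $|\gamma'|\equiv 1$, and admissibility gives $F(\gamma(t))\le F(z_e)-g_{z_e}(t)+\omega(g_{z_e}(t))$, which after integration yields the stated upper bound. For the lower bound, any competitor $\gamma$ must cross $O_{z_e,\delta}$; writing $\gamma_1$ for its first $T_{z_e}$-coordinate and using $|\gamma'|\ge|\gamma_1'|$ together with the admissibility lower bound $F(\gamma)\ge F(z_e)-g_{z_e}(\gamma_1)-\omega(g_{z_e}(\gamma_1))$ inside the bridge, the one-dimensional area formula applied to $\gamma_1$ on the bridge portion yields
\begin{equation*}
  \int_\gamma |\gamma'|\,e^{F/\eps}\,dt \;\ge\; e^{F(z_e)/\eps}\int_{\{g_{z_e}<\delta\}}e^{-(g_{z_e}(s)+\omega(g_{z_e}(s)))/\eps}\,ds,
\end{equation*}
since the crossing forces the image of $\gamma_1$ on that portion to cover $\{g_{z_e}<\delta\}$.

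For the minimal cut, the upper bound uses the explicit separating hypersurface $S=T_{z_e}(\{0\}\times\{G_{z_e}<\delta\})$, on which admissibility at $x_1=0$ gives $F\ge F(z_e)+G_{z_e}(x')-\omega(G_{z_e}(x'))$, so direct integration produces the upper bound in the stated form. For the lower bound, given any $S\in\mathcal{S}(B_\eps(x_{v_-}),B_\eps(x_{v_+});\Omega_e)$, I use the $1$-Lipschitz projection $\Pi(x):=x'$ in $T_{z_e}$-coordinates. For every $y'$ with $G_{z_e}(y')<\delta$ the vertical line $\{T_{z_e}(x_1,y'):x_1\in\R\}$, concatenated with fixed paths into $B_\eps(x_{v_\pm})$, is an admissible curve, so $S\cap\Pi^{-1}(y')$ is nonempty. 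Writing $m(y'):=\min_{x\in S\cap\Pi^{-1}(y')}F(x)$, the area formula combined with $J_\Pi\le 1$ and the elementary inequality $\sum\ge\max$ yields
\begin{equation*}
  \int_S e^{-F/\eps}\,d\Ha^{n-1} \;\ge\; \int_{\{G_{z_e}<\delta\}}e^{-m(y')/\eps}\,dy'.
\end{equation*}
For $y'$ such that $S\cap\Pi^{-1}(y')$ meets the bridge, admissibility gives $m(y')\le F(z_e)+G_{z_e}(y')+\omega(G_{z_e}(y'))$; if instead all intersections fall outside the bridge, then $m(y')<F(z_e)-\delta/3$ and the bound is only strengthened.

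The hard part is extracting the multiplicative error $\hat\eta(\eps)\to 0$ from the $\omega$-correction and from the truncation of the $s$- and $y'$-integrals to the full spaces $\R$ and $\R^{n-1}$. Both are handled by Laplace-type concentration: since $g_{z_e}$ and $G_{z_e}$ have proper minima at $0$ and $\omega(s)/s\to 0$, for each $\theta>0$ the bulk of the integrals localizes to a set on which $\omega(g_{z_e})\le\theta g_{z_e}$ and $\omega(G_{z_e})\le\theta G_{z_e}$, which yields a multiplicative factor tending to one as $\eps\to 0$ and supplies the function $\hat\eta$ in the statement.
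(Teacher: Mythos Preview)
The paper does not actually prove this proposition: it is quoted verbatim from \cite[Propositions~4.1--4.2]{BeLaVe} and the text immediately preceding the statement says ``for the proof see~\cite[Proposition~4.1--4.2]{BeLaVe}''. So there is no in-paper argument to compare against; your proposal is a reconstruction of the argument from the cited reference.

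That said, your sketch captures the correct strategy and matches the approach of \cite{BeLaVe}: work in the isometric coordinates of \cref{def:admissible}, use the axial segment as a test curve for the upper bound on $d_\eps$ and $S_0$ as a test surface for the upper bound on $V_\eps$, and for the lower bounds project onto the $x_1$-coordinate (respectively the $x'$-coordinates) inside the bridge, then invoke Laplace concentration (\cref{lem:convex}) to remove the $\omega$-correction and the truncation. Two points deserve a bit more care. First, in the lower bound for $d_\eps$ you assert that the first coordinate of any admissible curve covers all of $\{g_{z_e}<\delta\}$ while inside the bridge; this relies on the fact that the lateral boundary $\{G_{z_e}=\delta\}$ of the bridge does not meet the islands (since $F$ there is above $F(x_u;x_w)-\delta/3$), so the curve can only exit through the two ends $\{g_{z_e}=\delta\}$. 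Second, in the lower bound for $V_\eps$ the vertical fibers over $y'$ are only guaranteed to concatenate into admissible curves in $\Omega_e$ when $G_{z_e}(y')$ is somewhat smaller than $\delta$ (so that the endpoints land in the islands); restricting to, say, $\{G_{z_e}<\delta/3\}$ is harmless by \cref{lem:convex}. With these clarifications your argument is sound.
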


We need the definition of a spanning tree for Kirchhoff's formula.
\begin{definition}
  Let $G=(V,E)$ be a graph. We say that $G'$ is a \emph{spanning subgraph} of $G$ if $V(G') = V(G)$ and $E(G') \subset E(G)$ (i.e.~the same vertices but only a subset of the edges). A \emph{tree} is a
  connected graph which does not contain cycles and \emph{a spanning tree} of $G$ is a spanning subgraph of $G$ that is a tree. We denote the set of all spanning trees of $G$ by $\mathcal{T}(G)$. Finally, for two vertices $v,w \in V$ we let $G/vw$ denote the graph obtained by merging the vertices $v$ and $w$ together into a single vertex.
\end{definition}

We are now ready to state our main theorem.

\begin{theorem}\label{mainthm}
  Let $F$ be admissible as in \cref{def:admissible}, let $x_u$ and $x_w$ be local minimum points of $F$ and let $(G,\bm y)$ be the electrical network as in \cref{ss:construction}. Let $u,w$ be the associated vertices in $V$. Then the capacity is given by
  \begin{align*}
    \capacity(B_\eps(x_u),B_\eps(x_w)) \simeq \frac{T(G;{\bm y})}{T(G/uw;{\bm y})},
  \end{align*}
  where
  \begin{align}\label{def:T-thing}
    T(G;{\bm y}) = \sum_{G' \in \mathcal{T}(G)} \Big(  \prod_{e \in G'} {\bm y}_e \Big).
  \end{align}
\end{theorem}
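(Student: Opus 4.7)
The plan is to show that the continuous capacity is asymptotically equal to the effective conductance of the discrete electrical network $(G,\bm y)$, and then conclude via Kirchhoff's classical matrix-tree formula. I would first prove
\begin{equation*}
  \capacity(B_\eps(x_u),B_\eps(x_w)) \simeq C_{\mathrm{eff}}(u,w;G,\bm y),
\end{equation*}
where $C_{\mathrm{eff}}$ denotes the effective conductance between the terminals $u$ and $w$. This reduction is natural because, by \cref{lem:geometric-d} and the main result of~\cite{BeLaVe}, the admittance $\bm y_e$ defined in~\cref{eq:geometric} is precisely the pre-factor of the local capacity across the bridge $\Omega_e$.

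\textbf{Reduction to the network.} For the upper bound $\capacity \leq C_{\mathrm{eff}}(u,w)(1 + C\hat\eta)$ I would use the Dirichlet principle. Let $\psi:V\to\R$ be the discrete harmonic function on $(G,\bm y)$ with $\psi_u=1$ and $\psi_w=0$, so that its discrete Dirichlet energy equals $\sum_e \bm y_e (\psi_{v_+}-\psi_{v_-})^2 = C_{\mathrm{eff}}(u,w)$. I would define a continuous test function $h$ that equals $\psi_v$ on each island $U_v$ outside the bridges, and on each bridge $\Omega_e$ coincides with the local capacity minimizer with boundary values $\psi_{v_\pm}$ on $B_\eps(x_{v_\pm})$. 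Using \cref{lem:geometric-d} and~\cite{BeLaVe}, the Dirichlet energy of $h$ on $\Omega_e$ is $\simeq \bm y_e(\psi_{v_+}-\psi_{v_-})^2$, while contributions from outside the bridges are exponentially negligible; summation over edges yields the desired upper bound. For the matching lower bound I would invoke Thompson's principle in its continuous form
\begin{equation*}
  \capacity(A,B)^{-1} \leq \frac{1}{\eps}\int_{\R^n}|\mathbf{J}|^2 e^{F/\eps}\,dx
\end{equation*}
for any divergence-free $\mathbf{J}$ of unit flux from $A$ to $B$, taking $\mathbf{J}$ to be assembled from the local Thompson minimizers on each bridge with fluxes matching the discrete unit current $\tilde{\jmath}_e = \bm y_e(\psi_{v_+}-\psi_{v_-})/C_{\mathrm{eff}}(u,w)$ on the graph. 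Kirchhoff's node law on $\tilde{\jmath}$ ensures that $\mathbf{J}$ is globally divergence-free, and the GFT estimate of~\cite{BeLaVe} gives local energies $\simeq \tilde{\jmath}_e^2/\bm y_e$ summing to $\simeq 1/C_{\mathrm{eff}}(u,w)$.

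\textbf{Conclusion and main difficulty.} Once $\capacity \simeq C_{\mathrm{eff}}(u,w)$ has been established, the identity $C_{\mathrm{eff}}(u,w) = T(G;\bm y)/T(G/uw;\bm y)$ is the classical weighted form of Kirchhoff's formula, obtained by applying Cramer's rule to the graph Laplacian of $(G,\bm y)$ in combination with the matrix-tree theorem. I expect the principal technical difficulty to lie in the assembly step: the local minimizers on adjacent bridges must be glued across the intervening islands while preserving either continuity (for $h$) or the divergence-free constraint (for $\mathbf{J}$), with all errors uniformly controlled by the function $\hat\eta$ from \cref{lem:geometric-d}. This is exactly the point at which the discrete Kirchhoff node law enters the continuous analysis as the solvability condition for a local Neumann problem on each island, so that the island contributions are negligible and the global estimates reduce to the one-saddle theory of~\cite{BeLaVe}.
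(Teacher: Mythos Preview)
Your plan is sound and would prove the theorem, but it proceeds by the route the paper deliberately avoids. You propose to \emph{construct} test objects for both variational principles: a glued test function for Dirichlet (upper bound on the capacity) and a glued test flow for Thompson (lower bound). The paper does the opposite: it takes the global \emph{minimizer} of each problem and bounds its energy \emph{from below} over the bridges. Concretely, the Dirichlet minimizer $h_{A,B}$, read off at the island minima, furnishes a vector $\bm\varphi$ whose discrete energy $\langle L\bm\varphi,\bm\varphi\rangle$ underestimates the continuous one via the fundamental theorem of calculus and Cauchy--Schwarz on each bridge; this gives the lower bound on the capacity. Dually, any $X\in\mathcal{M}$ in Thompson's principle automatically produces, by integrating $X\cdot n$ across the separating surfaces $S_{z}$, a current $\bm j$ satisfying Kirchhoff's law (by the divergence theorem on each island component of $\Omega_{\delta/3}$), and Cauchy--Schwarz on each bridge shows $\eps\int_{O_e}|X|^2e^{F/\eps}\gtrsim \bm j_e^2/\bm y_e$; this gives the upper bound. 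The payoff is that the gluing step you correctly identify as the principal difficulty---solving Neumann problems on the islands to make $\mathbf{J}$ divergence-free, and controlling the island energies---simply disappears: no test object is ever built, and the discrete Kirchhoff law is a \emph{consequence} of the divergence theorem applied to the global minimizer rather than a compatibility condition to be enforced by hand. Your constructive approach is the classical one (in the spirit of~\cite{BGK,LMS}) and is workable, but the paper's stated novelty is precisely that the Geometric Function Theory framework renders such constructions unnecessary.
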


\cref{mainthm}, together with  the formula \eqref{eq:geometric}, provide the characterization of the capacity in the general case where the critical points may have any configuration.

\section{Preliminaries on graph theory and electrical networks}
In this section we recall some basic results in graph  theory. For an introduction to the topic  we refer to~\cite{Bollobas, Levin, Wagner}.

The signed incidence matrix  $D$ of the oriented graph $G=(V,E)$  is the $|V| \times |E|$ matrix with entries
\begin{align*}
  D_{ve} =
  \begin{cases}
    +1 & \text{if $e$ points into $v$ but not out} \\
    -1 & \text{if $e$ points out of $v$ but not in} \\
    0 & \text{otherwise}.
  \end{cases}
\end{align*}
Let $\bm y$ be a vector of admittances defined in \cref{ss:construction}. Let $Y$ be the $|E|\times|E|$ diagonal matrix that has $\bm y$ as its entries, i.e., $Y = \text{diag}({\bm y}_e: e \in E)$. We also define the  weighted Laplacian matrix as $L = DYD^T$.

We begin by recalling the weighted Matrix-Tree theorem, see~\cite[Theorem 5]{Wagner}, which relates the quantity \cref{def:T-thing} to the weighted Laplacian matrix.
\begin{proposition}\label{lem:matrix:tree}
  Let $G = (V,E)$ be an oriented graph and let $D, Y$ and $L$  be as above. Then, for any $v \in V$
  \begin{align*}
    T(G;{\bm y}) = \det L(v \mid v)
  \end{align*}
  where $L(v \mid v)$ is $L$ with the row and column corresponding to $v$ removed and $T(G;{\bm y}) $ is defined in~\cref{def:T-thing}.
\end{proposition}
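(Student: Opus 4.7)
The plan is to view this identity as an instance of the weighted Matrix--Tree theorem and derive it from the Cauchy--Binet formula applied to the factorisation $L(v \mid v) = \tilde D\, Y\, \tilde D^T$, where $\tilde D$ is the $(|V|-1)\times|E|$ matrix obtained from $D$ by deleting the row indexed by $v$. Since $Y$ is diagonal, Cauchy--Binet produces
\begin{align*}
  \det L(v \mid v) = \sum_{\substack{S \subset E \\ |S|=|V|-1}} \bigl(\det \tilde D_{\cdot, S}\bigr)^2 \, \prod_{e \in S} {\bm y}_e ,
\end{align*}
so the identity reduces to the classical combinatorial fact that $(\det \tilde D_{\cdot,S})^2 = 1$ when $(V,S)$ is a spanning tree of $G$ and equals $0$ otherwise.

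For the vanishing direction I would split into two cases. If $(V,S)$ contains a cycle, then walking around the cycle with $\pm 1$ weights according to orientation exhibits a nontrivial vanishing linear combination of the columns of $D_{\cdot, S}$ (each intermediate vertex contributes a cancelling $+1$ and $-1$), forcing $\det \tilde D_{\cdot,S} = 0$. If $(V,S)$ is disconnected, then since $|S|=|V|-1$ there must be a connected component $C \subset V$ with $v \notin C$; summing the rows of $\tilde D_{\cdot,S}$ indexed by $C$ vanishes, because each column of $\tilde D_{\cdot, S}$ has either both endpoints in $C$ (contributing $+1-1=0$) or neither (contributing $0$).

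For the non-vanishing direction I would proceed by induction on $|V|$. A spanning tree with $|V|\ge 2$ always admits a leaf $v_0 \neq v$; the corresponding row of $\tilde D_{\cdot, S}$ then has exactly one non-zero entry, equal to $\pm 1$, at the unique edge $e_0$ incident to $v_0$. Expanding the determinant along this row reduces the computation to the incidence-matrix determinant of the smaller spanning tree $(V\setminus\{v_0\}, S\setminus\{e_0\})$ with the row of $v$ still deleted, and the inductive hypothesis closes the argument.

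The main obstacle is really this combinatorial lemma on minors of the signed incidence matrix; once it is in place, collecting the surviving terms in the Cauchy--Binet expansion immediately yields $\sum_{G'\in \mathcal{T}(G)} \prod_{e \in G'} {\bm y}_e = T(G; {\bm y})$. Because the statement is classical (see, e.g., \cite[Theorem 5]{Wagner} or~\cite{Bollobas}), in practice I would invoke the lemma with a reference rather than reprove it, and devote the write-up instead to checking that the sign convention for $D$ and the definition $L=DYD^T$ used here match the standard reference and that edges with $\mathbf{y}_e = 0$ (loops) are automatically excluded from the spanning-tree sum.
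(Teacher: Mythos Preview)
Your sketch via Cauchy--Binet and the $\{0,\pm 1\}$ structure of the minors of the signed incidence matrix is the standard correct argument for the weighted Matrix--Tree theorem. The paper, however, does not prove this proposition at all: it is stated as a known result and simply attributed to~\cite[Theorem 5]{Wagner}, which is precisely what you say you would do in practice in your final paragraph.
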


Let us recall Kirchhoff's theorem, see~\cite[Theorem 8]{Wagner}, { which relates the right-hand side of the formula in \cref{mainthm} to the solution of a linear system}.

\begin{proposition}\label{kirchhoff}
  {Let $G = (V,E)$ be an oriented graph, $(G,\bm y)$ the electrical network, and let $L$ be the corresponding weighted Laplacian matrix.}  Fix $u,w \in V$ and let the vector  ${\bm \varphi}  \in \R^{|V|}$, with the component ${\bm \varphi}_u = 0$, be the solution  to the system
  \begin{align*}
    L {\bm \varphi} = {\bm \delta}_w,
  \end{align*}
  where $\bm \delta_w$ is a vector with $1$ in the position of $w$ and $0$ otherwise. Then  the component ${\bm \varphi}_w$
  is given by
  \begin{align*}
    {\bm \varphi}_w = \frac{T(G/uw;{\bm y})}{T(G;{\bm y})}.
  \end{align*}
\end{proposition}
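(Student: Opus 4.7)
The plan is to combine Cramer's rule for the reduced linear system with two applications of the Matrix-Tree theorem (\cref{lem:matrix:tree}), once to the original graph $G$ to handle the denominator, and once to the contracted graph $G/uw$ to handle the numerator. The key structural observation is that contracting the two vertices $u,w$ in $G$ corresponds, at the level of the weighted Laplacian, to adding the $u$ and $w$ rows together and the $u$ and $w$ columns together, so that the resulting doubly-bordered minor of $L_G$ is exactly a principal minor of $L_{G/uw}$.

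First, I would exploit the fact that $L$ is singular with kernel spanned by the all-ones vector (since column sums vanish), so the equation $L\bm\varphi = \bm\delta_w$ cannot be understood literally in $\R^{|V|}$; rather, the normalization $\bm\varphi_u = 0$ and the omission of the $u$-th equation produces a uniquely solvable reduced system
\begin{equation*}
  L(u\mid u)\, \tilde{\bm\varphi} = \tilde{\bm\delta}_w,
\end{equation*}
where tildes denote vectors with the $u$-component removed. By \cref{lem:matrix:tree}, $\det L(u\mid u) = T(G;\bm y)$, which is nonzero as long as $G$ is connected (spanning trees exist). Applying Cramer's rule to read off $\bm\varphi_w = \tilde{\bm\varphi}_w$, one obtains
\begin{equation*}
  \bm\varphi_w \;=\; \frac{\det M}{\det L(u\mid u)},
\end{equation*}
where $M$ is $L(u\mid u)$ with its $w$-th column replaced by $\tilde{\bm\delta}_w$. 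Expanding $\det M$ along that column, which is a standard basis vector, collapses it to $\pm \det L(\{u,w\}\mid \{u,w\})$; by ordering the vertices with $u$ first and $w$ second, the sign is positive.

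The main work, and the step I would expect to require the most care, is identifying $\det L(\{u,w\}\mid\{u,w\})$ with $T(G/uw;\bm y)$. To do this, I would write out the entries of the weighted Laplacian $L_{G/uw}$ of the contracted graph: for the merged vertex $m$, its diagonal entry is the sum of admittances of all edges of $G$ incident to $u$ or $w$ except those joining $u$ and $w$ (which become loops and contribute $0$ by the loop convention $\bm y_e = 0$ in \cref{ss:construction}); its off-diagonal entry with any $v\notin\{u,w\}$ is $L_{uv}+L_{wv}$; and entries indexed by $v,v'\notin\{u,w\}$ are unchanged. In particular, the principal minor $L_{G/uw}(m\mid m)$ is identical to $L(\{u,w\}\mid\{u,w\})$. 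A second application of \cref{lem:matrix:tree}, this time to $G/uw$, then gives
\begin{equation*}
  \det L(\{u,w\}\mid \{u,w\}) \;=\; \det L_{G/uw}(m\mid m) \;=\; T(G/uw;\bm y).
\end{equation*}

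Putting the two pieces together yields $\bm\varphi_w = T(G/uw;\bm y)/T(G;\bm y)$, as claimed. The only subtle points are the sign bookkeeping in the cofactor expansion (handled by choosing a convenient ordering of vertices) and the correct treatment of loops arising from the contraction; both are handled cleanly by the loop convention $\bm y_{e}=0$ adopted in \cref{ss:construction}, which guarantees that loops contribute trivially both to the Laplacian of $G/uw$ and to the spanning-tree sum $T(G/uw;\bm y)$.
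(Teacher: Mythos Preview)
Your argument is correct and is the standard proof of Kirchhoff's formula via Cramer's rule and two applications of the Matrix--Tree theorem. Note, however, that the paper does not actually prove this proposition: it is stated as a known result and referenced to \cite[Theorem~8]{Wagner}. So there is no ``paper's proof'' to compare against; your proposal simply supplies the omitted argument.

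One small correction to your commentary: the loop convention $\bm y_e = 0$ from \cref{ss:construction} applies to loops already present in the original graph $G$, not to the new loops created by the contraction $G/uw$ (edges that ran between $u$ and $w$). The reason those contracted loops are harmless is structural rather than conventional: for any loop $e$, the corresponding column of the signed incidence matrix $D$ is identically zero, so loops never contribute to $L = DYD^T$ regardless of the value of $\bm y_e$; and spanning trees, being acyclic, never contain loops, so $T(G/uw;\bm y)$ is also automatically independent of them. Your identification $L_{G/uw}(m\mid m) = L_G(\{u,w\}\mid\{u,w\})$ and the sign computation are both correct.
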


The classical interpretation of Kirchhoff's theorem is that of a network of resistors (the admittance is the inverse of the resistance), where we have grounded one end of the network ($\bm \varphi_u = 0$) and let $1$ Ampere of current flow through it (right-hand side $\bm \delta_w$). Then the voltage at the exiting node $\bm \varphi_w$ is given by the formula above. This allowed Kirchhoff (\cite{Kirchhoff}) to solve the problem of replacement resistance which in this case is just $\bm \varphi_w$.

Given an electrical network $(G,\bm y)$ we may define a discrete Dirichlet capacity between two vertices $v_1,v_m \in V$ as
\begin{align*}
  \min_{{\bm \varphi} \in \R^{m}; {\bm \varphi}_1 = 1; {\bm \varphi}_m = 0} \langle L {\bm \varphi}, {\bm \varphi} \rangle
\end{align*}
where $L$ is the weighted Laplacian matrix. Then the minimizer of the above problem is inversely related to Kirchhoff's theorem, \cref{kirchhoff}. For more information, see~\cite{Bollobas}.

\begin{lemma}\label{bennyLemma}
  Let $(G,\bm y)$ be the electrical network from \cref{ss:construction} and let $L$ be the Laplacian matrix. Then it holds
  \begin{align*}
    \min_{{\bm \varphi} \in \R^{m}; {\bm \varphi}_1 = 1; {\bm \varphi}_m = 0} \langle L {\bm \varphi}, {\bm \varphi} \rangle = \frac{T(G;{\bm y})}{T(G/v_1v_m;{\bm y})}.
  \end{align*}
  The minimizer is given by the unique solution with the boundary conditions $\bm \varphi_m=0$, $\bm \varphi_1=1$ to the linear system
  \begin{align*}
    L {\bm \varphi} = \lambda ( {\bm \delta}_1 - {\bm \delta}_m)
  \end{align*}
  where ${\bm \delta}_1 = (1,0,\ldots,0)$ and ${\bm \delta}_m = (0,\ldots,0,1)$ are  vectors of length $m$ and $\lambda$ is the value of the minimum problem.
\end{lemma}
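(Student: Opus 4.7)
The plan is to attack the quadratic program by Lagrange multipliers, extract the first--order conditions, and then match the resulting linear system against Kirchhoff's formula from \cref{kirchhoff}.

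First I would observe that $L=DYD^T$ is symmetric and positive semidefinite with $L{\bf 1}=0$, and that by the construction in \cref{ss:construction} the component of the graph containing $v_1$ and $v_m$ is connected, so $\ker L=\operatorname{span}\{{\bf 1}\}$. The affine subspace $\{\bm\varphi\in\R^m:\bm\varphi_1=1,\ \bm\varphi_m=0\}$ does not meet $\operatorname{span}\{{\bf 1}\}$, hence $\bm\varphi\mapsto\la L\bm\varphi,\bm\varphi\ra$ is strictly convex on it and attains a unique minimizer. Any admissible variation $\bm\eta$ satisfies $\bm\eta_1=\bm\eta_m=0$, so the Euler--Lagrange condition $\la L\bm\varphi,\bm\eta\ra=0$ forces $(L\bm\varphi)_i=0$ for every $i\neq 1,m$; hence $L\bm\varphi=\alpha\bm\delta_1+\beta\bm\delta_m$ for some scalars $\alpha,\beta$. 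Applying ${\bf 1}^T$ and using ${\bf 1}^T L=0$ yields $\alpha+\beta=0$, and setting $\lambda:=\alpha$ gives the stated identity $L\bm\varphi=\lambda(\bm\delta_1-\bm\delta_m)$. Pairing this with $\bm\varphi$ and using $\bm\varphi_1-\bm\varphi_m=1$ shows that the minimum value is exactly $\lambda$.

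Next I would identify $\lambda$ via \cref{kirchhoff}. Set $\bm\psi:=({\bf 1}-\bm\varphi)/\lambda$; then $\bm\psi_1=0$, $\bm\psi_m=1/\lambda$, and, using $L{\bf 1}=0$,
\begin{equation*}
L\bm\psi=-\tfrac{1}{\lambda}L\bm\varphi=\bm\delta_m-\bm\delta_1.
\end{equation*}
This is precisely the Kirchhoff problem of injecting one unit of current at $v_m$ and grounding at $v_1$, so \cref{kirchhoff} (with $u=v_1$ and $w=v_m$) gives $\bm\psi_m=T(G/v_1v_m;\bm y)/T(G;\bm y)$. Comparing with $\bm\psi_m=1/\lambda$ yields $\lambda=T(G;\bm y)/T(G/v_1v_m;\bm y)$, which is the identity stated in the lemma.

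The main subtlety I anticipate is reconciling the two--terminal Dirichlet conditions $\bm\varphi_1=1,\bm\varphi_m=0$ with the single--source equation $L\bm\varphi=\bm\delta_w$ that appears in \cref{kirchhoff}: the latter is genuinely an equation in the image of $L$ (equivalently, $L\bm\varphi=\bm\delta_w-\bm\delta_u$ once the ground $\bm\varphi_u=0$ has been imposed and ${\bf 1}^T L=0$ has been used), and one has to keep this implicit current balance in mind when invoking the proposition. Once this is clarified, the rescaling by $1/\lambda$ and the sign flip $\mathbf{1}-\bm\varphi$ align the two problems exactly and the identification of the minimum with $T(G;\bm y)/T(G/v_1v_m;\bm y)$ is immediate.
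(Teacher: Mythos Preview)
Your argument is correct and follows the same route as the paper: Lagrange multipliers to obtain $L\bm\varphi=\lambda(\bm\delta_1-\bm\delta_m)$ with minimum value $\lambda$, and then \cref{kirchhoff} to identify $\lambda$. The only cosmetic differences are that the paper reduces to the invertible $(m-1)\times(m-1)$ block $L(v_m\mid v_m)$ (invoking \cref{lem:matrix:tree} for uniqueness) and applies \cref{kirchhoff} directly to $\bm\varphi/\lambda$ with the roles of ground and source swapped, rather than using your kernel characterization and the sign--flip $\bm\psi=(\mathbf{1}-\bm\varphi)/\lambda$.
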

\begin{proof}
  Recall that $L = D Y D^T$, where $D$ is the signed incidence matrix and $Y$ is the admittance matrix. Let us first reduce the problem. 
  Note that the constraint ${\bm \varphi}_m = 0$ implies that we may remove the last row of $D$ (call it $D_-$) and the last entry of ${\bm \varphi}$ (call it ${\bm \varphi}_-$) and note that $D_-^T {\bm \varphi}_- = D^T {\bm \varphi}$. Let $L_- = D_- Y D_-^T = L(v_m \mid v_m)$ and note that similar reasoning gives that
  \begin{align*}
    \langle L_- {\bm \varphi}_-, {\bm \varphi}_- \rangle = \langle L {\bm \varphi},{\bm \varphi} \rangle.
  \end{align*}
  By the Lagrange multiplier method we get
  \begin{align*}
    \begin{cases}
      L_- {\bm \varphi}_- &= \lambda {\bm \delta_1} \\
      ({\bm \varphi_-})_1 &= 1,
    \end{cases}
  \end{align*}
  where ${\bm \delta_1} = (1,0,\ldots)$. Note that by \cref{lem:matrix:tree}
  we know that $\det(L_-) = T(G;{\bm y}) \neq 0$ which gives that the above system has a unique solution. From the above we get that the value of the minimum is given as
  \begin{align}\label{eq:shit1}
    \langle L {\bm \varphi}, {\bm \varphi} \rangle = \langle L_- {\bm \varphi}_-, {\bm \varphi}_- \rangle = \lambda.
  \end{align}
  Next, we note that $\bm \varphi / \lambda$ is a solution to the linear system in \cref{kirchhoff}, as such we get 
  \begin{align*}
    \frac{T(G/uw;{\bm y})}{T(G;{\bm y})} = \frac{\bm \varphi_1}{\lambda} = \frac{1}{\lambda},
  \end{align*}
  which together with \cref{eq:shit1} finishes the proof.
\end{proof}

We also need the following dual formulation of the minimization problem in \cref{bennyLemma}.

\begin{lemma}\label{lem:dual-net}
  Let $G = (V,E)$ be an oriented graph, where $V = (v_1,\ldots,v_m)$, let $D$ be the signed incidence matrix, $L$ the Laplacian matrix, and let $Y$ be the admittance matrix. Then it holds
  \begin{align}\label{eq:shit2}
    \min \left( \langle Y^{-1} {\bm j} , {\bm j} \rangle : \,\,  {\bm j} \in \R^{|E|},  \,\, D {\bm j} = {\bm \delta}_1 -  {\bm \delta}_m  \right) = \frac{1}{\lambda},
  \end{align}
  where $\lambda$ is the value of the minimization problem from \cref{bennyLemma}, i.e.,
  \begin{align*}
    \lambda =  \min_{{\bm \varphi} \in \R^{m}; {\bm \varphi}_1 = 1; {\bm \varphi}_m = 0} \langle L {\bm \varphi}, {\bm \varphi} \rangle.
  \end{align*}
\end{lemma}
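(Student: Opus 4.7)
The identity to prove is Thompson's principle: the two minimizations in \cref{bennyLemma} and \cref{eq:shit2} are convex duals, the primal being over potentials and the present one over unit flows. My plan is to produce the optimal flow directly from the primal minimizer $\bm \varphi^*$ of \cref{bennyLemma}, then match it via a one-line Cauchy--Schwarz lower bound, using nothing beyond the factorization $L = DYD^T$.

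\textbf{Construction of the optimizer and the lower bound.} I would set $\bm j^* := \lambda^{-1} Y D^T \bm \varphi^*$, where $\bm \varphi^*$ is the unique minimizer of \cref{bennyLemma}, which satisfies $\bm \varphi^*_1 = 1$, $\bm \varphi^*_m = 0$ and $L \bm \varphi^* = \lambda(\bm \delta_1 - \bm \delta_m)$. Then
\[
D \bm j^* = \lambda^{-1} DYD^T \bm \varphi^* = \lambda^{-1} L \bm \varphi^* = \bm \delta_1 - \bm \delta_m,
\]
so $\bm j^*$ is admissible, and
\[
\langle Y^{-1} \bm j^*, \bm j^* \rangle = \lambda^{-2} \langle YD^T \bm \varphi^*, D^T \bm \varphi^* \rangle = \lambda^{-2} \langle L \bm \varphi^*, \bm \varphi^* \rangle = \tfrac{1}{\lambda},
\]
where the last equality uses $\langle L \bm \varphi^*, \bm \varphi^* \rangle = \lambda(\bm \varphi^*_1 - \bm \varphi^*_m) = \lambda$. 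For the matching lower bound, I take an arbitrary admissible $\bm j$ with $D \bm j = \bm \delta_1 - \bm \delta_m$, test against $\bm \varphi^*$, and apply Cauchy--Schwarz:
\[
1 = \langle \bm \delta_1 - \bm \delta_m, \bm \varphi^* \rangle = \langle D \bm j, \bm \varphi^* \rangle = \langle Y^{-1/2} \bm j, Y^{1/2} D^T \bm \varphi^* \rangle \le \sqrt{\langle Y^{-1} \bm j, \bm j \rangle \cdot \lambda}.
\]
Squaring gives $\langle Y^{-1} \bm j, \bm j \rangle \ge 1/\lambda$, which combined with the previous computation proves \cref{eq:shit2}.

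\textbf{Delicate points.} The Laplacian $L$ is singular with kernel spanned by $\mathbf{1}$, but $\bm \delta_1 - \bm \delta_m \perp \mathbf{1}$, so the identity $L \bm \varphi^* = \lambda(\bm \delta_1 - \bm \delta_m)$ from \cref{bennyLemma} is consistent and the manipulations above are unproblematic. A second, essentially notational, issue is that loops have $\bm y_e = 0$, so $Y^{-1}$ is literally undefined; since loops correspond to zero columns of $D$, they affect neither $D \bm j$ nor $L$, and one can either restrict to the loopless subgraph throughout or set $\bm j_e = 0$ on loops and read $\langle Y^{-1} \bm j, \bm j \rangle$ as $\sum_{e\text{ non-loop}} \bm j_e^2 / \bm y_e$. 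I expect no substantive obstacle beyond handling these two bookkeeping points.
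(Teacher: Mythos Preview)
Your proof is correct and complete. Both you and the paper construct the same candidate flow $\bm j^* = \lambda^{-1} Y D^T \bm\varphi^*$ and verify it is admissible with value $1/\lambda$, but the arguments for optimality differ. The paper takes an arbitrary minimizer $\bm j$ of \cref{eq:shit2}, uses the first-variation condition $Y^{-1}\bm j \in \mathrm{Ker}^\perp(D)$, and then shows that the difference $\alpha := \bm j^* - \bm j$ (which lies in $\mathrm{Ker}(D)$) satisfies $\langle Y^{-1}\alpha,\alpha\rangle = 0$, forcing $\alpha = 0$ by positive definiteness. Your route is the standard weak-duality argument: you bypass the Euler--Lagrange step entirely and obtain the lower bound $\langle Y^{-1}\bm j,\bm j\rangle \ge 1/\lambda$ for \emph{every} admissible $\bm j$ directly from Cauchy--Schwarz in the $Y$-weighted inner product. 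Your approach is shorter and arguably more transparent as a duality statement; the paper's approach has the minor advantage of establishing uniqueness of the minimizer as a by-product, though that is not needed here. Your remarks on the singular kernel of $L$ and on loops are accurate and match the paper's implicit conventions.
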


We point out that one may interpret the minimization problem \cref{eq:shit2} as a discrete version of Thompson's principle.

\begin{proof}
  Let ${\bm j}\in \R^{|E|}$ be the minimizer of~\cref{eq:shit2}. The first variation of the minimization problem implies that $\langle Y^{-1} {\bm j},\bm e \rangle = 0$ for all ${ \bm e} \in \R^{|E|}$ with $D{ \bm e} = 0$, i.e.,
  \begin{equation} \label{eq:thompson-1}
    Y^{-1} {\bm j} \in \text{Ker}^{\perp}(D).   
  \end{equation}
  Recall that the solution to the minimization problem in~\cref{bennyLemma} satisfies
  \begin{align*}
    \bm \delta_1 - \bm \delta_m = \lambda^{-1} L \bm \varphi = \lambda^{-1} DYD^T \bm \varphi = D(\lambda^{-1} Y D^T \bm \varphi)
  \end{align*}
  as such $\tilde {\bm j} = \lambda^{-1} YD^T \bm \varphi$ will according to the above satisfy the constraint $D \tilde{ \bm j} = {\bm \delta}_1- {\bm \delta}_m$. 
  Then, $\tilde {\bm j} - \bm j =: \alpha \in \text{Ker}(D)$ and it holds trivially $\langle D^T{\bm \varphi}, \alpha \rangle = 0$ since, $D^T{\bm \varphi} \in \text{Ker}^{\perp}(D)$. Moreover, by~\cref{eq:thompson-1} it holds $\langle Y^{-1} {\bm j}, \alpha \rangle = 0$, thus
  \begin{align*}
    \langle Y^{-1}\alpha, \alpha \rangle = \langle  \lambda^{-1} D^T{\bm \varphi}, \alpha \rangle - \langle Y^{-1}{\bm j}, \alpha \rangle = 0.
  \end{align*}
  Since $Y^{-1}$ is positive definite we obtain $\alpha = 0$, that is
  \begin{equation*}
    \lambda^{-1} Y D^T{\bm \varphi} = {\bm j}.
  \end{equation*}
  The result then follows from~\cref{bennyLemma} as 
  \begin{equation*}
    \langle Y^{-1} {\bm j} , {\bm j} \rangle = \frac{\langle Y^{-1} Y D^T{\bm \varphi} , Y D^T{\bm \varphi}  \rangle}{\lambda^2}   =  \frac{\langle D Y D^T{\bm \varphi},{\bm \varphi}  \rangle}{\lambda^2} = \frac{1}{\lambda}.
  \end{equation*} 
\end{proof}

\subsection{Simplification of the electrical network}

The formula in the statement of \cref{mainthm} given by Kirchhoff's formula is precise, but if the graph contains many cycles and loops, it may be unnecessarily cumbersome to calculate. In the next two lemmas we consider the case when the formula in \cref{mainthm}  can be simplified.

Consider a graph $G = (V,E)$. A \emph{cut vertex} is a vertex, that when removed from $G$ will increase the number of components. A \emph{biconnected} graph is a graph with no cut vertices. A \emph{biconnected component} of a graph $G$ is a maximal biconnected subgraph.

\begin{lemma}\label{lem:biconnect}
  Let $G=(V,E)$ be a graph with a biconnected component $G_1=(V_1,E_1)$ and let $G_2=(V_2,E_2)$ be a subgraph of $G$ such that they intersect in one cut vertex $v \in V$ and $G = G_1 \cup G_2$. Then if $\mathbf{y} \in \R^{|E|}$ is the admittance vector, $\mathbf{y_1} = \mathbf{y}\lvert_{E_1}$ and $\mathbf{y_2} = \mathbf{y}\lvert_{E_2}$, it holds
  \begin{align*}
    T(G;\mathbf{y}) = T(G_1;\mathbf{y_1})T(G_2;\mathbf{y_2}).
  \end{align*}
\end{lemma}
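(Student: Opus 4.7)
The approach I would take is to establish a bijection between spanning trees of $G$ and pairs consisting of a spanning tree of $G_1$ together with a spanning tree of $G_2$, then exploit the fact that $E_1$ and $E_2$ are disjoint to factor the sum that defines $T(G;\mathbf{y})$. This is cleaner and more transparent than working through the Matrix-Tree theorem (\cref{lem:matrix:tree}).

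First I would observe that since $V_1 \cap V_2 = \{v\}$ and $G = G_1 \cup G_2$, every edge of $G$ lies in exactly one of $E_1, E_2$. Given $T \in \mathcal{T}(G)$, set $T_i := T \cap E_i$ for $i = 1, 2$, and claim that $T_i \in \mathcal{T}(G_i)$. Acyclicity of $T_i$ is inherited from $T$. For the spanning property, fix $x \in V_i$ and consider the unique path $\gamma$ in $T$ from $x$ to $v$. Because every edge incident to a vertex of $V_i \setminus \{v\}$ lies in $E_i$, the path $\gamma$ cannot leave $V_i$ before first reaching $v$; hence $\gamma$ lies entirely in $T_i$, showing that $T_i$ connects every vertex of $V_i$ to $v$.

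Conversely, given $(T_1,T_2) \in \mathcal{T}(G_1) \times \mathcal{T}(G_2)$, I would argue that $T_1 \cup T_2 \in \mathcal{T}(G)$. Connectivity follows because every vertex of $V_i$ is connected to $v$ within $T_i$, and the union has $(|V_1|-1) + (|V_2|-1) = |V|-1$ edges. Acyclicity follows since any cycle in $T_1 \cup T_2$ would visit $v$ at most once (as $v$ is a cut vertex) and would therefore lie entirely inside either $G_1$ or $G_2$, contradicting acyclicity of the $T_i$. Hence $T \mapsto (T \cap E_1, T \cap E_2)$ is a bijection $\mathcal{T}(G) \to \mathcal{T}(G_1) \times \mathcal{T}(G_2)$. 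The product $\prod_{e \in T} \mathbf{y}_e$ then factors as $\bigl(\prod_{e \in T_1} \mathbf{y}_e\bigr)\bigl(\prod_{e \in T_2} \mathbf{y}_e\bigr)$, and summation gives
\[
T(G;\mathbf{y}) \;=\; \sum_{T_1 \in \mathcal{T}(G_1)} \sum_{T_2 \in \mathcal{T}(G_2)} \Big(\prod_{e \in T_1} \mathbf{y}_e\Big)\Big(\prod_{e \in T_2} \mathbf{y}_e\Big) \;=\; T(G_1;\mathbf{y_1})\,T(G_2;\mathbf{y_2}).
\]

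There is no serious obstacle; the one subtlety is ensuring the forward direction of the bijection, which relies entirely on the cut-vertex property at $v$ (the biconnectedness of $G_1$ stated in the hypothesis is not used in the proof and merely reflects the setting of block decomposition in which the lemma is applied). The argument extends verbatim to the iterative decomposition into biconnected components that one is likely to use when simplifying the network in \cref{mainthm}.
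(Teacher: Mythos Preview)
Your argument is correct. The bijection $T \mapsto (T\cap E_1,\,T\cap E_2)$ between $\mathcal{T}(G)$ and $\mathcal{T}(G_1)\times\mathcal{T}(G_2)$ is exactly the right combinatorial fact, and your verification of both directions is sound. (The acyclicity check in the backward direction is even redundant once you have connectivity together with the edge count $|V|-1$.) Your remark that the biconnectedness of $G_1$ is not actually used is also accurate: only the cut-vertex property at $v$ matters.

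The paper, however, proceeds differently. It invokes the weighted Matrix--Tree theorem (\cref{lem:matrix:tree}): after ordering the vertices so that those of $G_1$ come first, the reduced Laplacian $L(v\mid v)$ becomes block diagonal with blocks $L_{G_1}(v\mid v)$ and $L_{G_2}(v\mid v)$, and the determinant factors accordingly. This is shorter because it piggybacks on a tool already stated in the paper, and it fits naturally with the linear-algebraic framework used elsewhere (e.g.\ in \cref{bennyLemma} and \cref{lem:dual-net}). Your approach is more elementary and self-contained, avoiding the Matrix--Tree theorem altogether and making the combinatorial content of the statement transparent; it also makes the observation about biconnectedness being superfluous immediate, whereas in the Laplacian argument this is slightly hidden in the block structure.
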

\begin{proof}
  By the definition of biconnected components, and since $G_1,G_2$ intersect only in $v$, we can by reordering the vertices write the Laplacian matrix $L = D Y D^T$ such that the first rows/columns correspond to the vertices in $G_1$. Then $L$ with the column and row corresponding to $v$ removed ($L(v \mid v)$) has a block diagonal structure with the blocks $L_1 = L_{G_1}(v \mid v)$ and $L_2 = L_{G_2}(v \mid v)$. Now, since $\det(L) = \det(L_1)\det(L_2)$ the claim follows by applying \cref{lem:matrix:tree} on all matrices.
\end{proof}

We can use the above lemma to simplify the computation of Kirchhoff's theorem in the presence of irrelevant biconnected components, see \cref{pic:biconnected}.

\begin{proposition}\label{lem:biconnect:kirchhoff}
  Consider the graph $G = (V,E)$ and let $Y$ be the admittance matrix. Assume that $G =  G_1 \cup G_2$, where $G_1$ is a biconnected component and $G_1, G_2$ intersect in a cut vertex $v \in V$. Then if $u,w \in V_2$, it holds
  \begin{align*}
    \frac{T(G;{\bm y})}{T(G/uw;{\bm y})} = \frac{T(G_2;{\bf  y}_2)}{T(G_2/uw;{\bm y}_2)}.
  \end{align*}
\end{proposition}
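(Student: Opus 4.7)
The plan is to apply \cref{lem:biconnect} twice: once to the numerator $T(G;\bm y)$ and once to the denominator $T(G/uw;\bm y)$, and then simply divide, letting the common factor $T(G_1;\bm y_1)$ cancel.

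For the numerator, \cref{lem:biconnect} applies directly to the decomposition $G = G_1 \cup G_2$ (with $G_1$ biconnected and meeting $G_2$ only at the cut vertex $v$), yielding
\[
T(G;\bm y) \;=\; T(G_1;\bm y_1)\, T(G_2;\bm y_2).
\]
For the denominator, I would first argue that $G/uw$ admits an analogous decomposition. Since $u,w \in V_2$, no edge of $G_1$ is affected by the identification $u \sim w$: the edge set of $G_1$ is unchanged, and its vertex set is unchanged unless the cut vertex $v$ happens to equal $u$ or $w$, in which case $v$ is simply relabeled to the merged vertex $\tilde{uw}$. In either situation, $G/uw = G_1 \cup (G_2/uw)$, the subgraph $G_1$ is still biconnected (it is isomorphic to itself), and $G_1 \cap (G_2/uw)$ is a single vertex (either $v$ or $\tilde{uw}$), which is a cut vertex of $G/uw$. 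Therefore \cref{lem:biconnect} applies again to give
\[
T(G/uw;\bm y) \;=\; T(G_1;\bm y_1)\, T(G_2/uw;\bm y_2).
\]

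Dividing these two identities, the factor $T(G_1;\bm y_1)$ cancels, and the result
\[
\frac{T(G;\bm y)}{T(G/uw;\bm y)} \;=\; \frac{T(G_2;\bm y_2)}{T(G_2/uw;\bm y_2)}
\]
follows immediately. The only delicate point is verifying that the decomposition into a biconnected component plus cut vertex survives the quotient operation, and this is straightforward once one notes that identification of vertices lying in $V_2 \setminus \{v\}$ (or one of them possibly equal to $v$) cannot destroy the biconnectedness of $G_1$ nor create a second shared vertex between $G_1$ and the quotient of $G_2$.
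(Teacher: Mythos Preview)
Your proof is correct and matches the paper's intended approach: the paper states \cref{lem:biconnect:kirchhoff} without explicit proof, immediately after \cref{lem:biconnect}, precisely because it follows by applying that lemma to both $G$ and $G/uw$ and cancelling the common factor $T(G_1;\bm y_1)$. Your check that the decomposition $G/uw = G_1 \cup (G_2/uw)$ still satisfies the hypotheses of \cref{lem:biconnect} (with the shared vertex being $v$, or $\tilde{uw}$ if $v\in\{u,w\}$) is exactly the point that needs to be verified, and your argument for it is fine; note in particular that the proof of \cref{lem:biconnect} only uses that $G_1$ and $G_2$ meet in a single vertex to obtain the block-diagonal structure of $L(v\mid v)$, so maximality of $G_1$ as a biconnected subgraph after the merge is not even strictly needed.
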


The main consequence of \cref{lem:biconnect:kirchhoff} is that, using the terminology from~\cite{BeLaVe}, only the vertices in $V_2$ are relevant. We also point out that this is related to the definition of a gate in~\cite{BGK}. In particular, referring to Kirchhoff's theorem, a consequence of the above is that the voltage $\bm \varphi$ is constant on the biconnected components and is thus redundant.

\begin{figure}
  \begin{tikzpicture}[scale=0.7]
    \begin{scope}[every node/.style={circle,thick,draw}]
      \node (A) at (-1,0) {a};
      \node (B) at (2.5,1) {};
      \node (C) at (2.5,-1) {};
      \node (D) at (6,0) {b} ;

      \node (F) at (1,-2) {};
      \node (G) at (4,-2) {};
    \end{scope}

    \begin{scope}[
      every node/.style={fill=white,circle},
      every edge/.style={draw=red,very thick}]
      \path [->] (A) edge (B);
      \path [->] (B) edge (D);
      \path [->] (A) edge (C);
      \path [->] (C) edge (D);
      \path [->] (C) edge (B);
    \end{scope}
    \begin{scope}[
      every node/.style={fill=white,circle},
      every edge/.style={draw=blue,very thick}]
      \path [->] (C) edge (F);
      \path [->] (F) edge (G);
      \path [->] (G) edge (C);
    \end{scope}
  \end{tikzpicture}
  \caption{Example of the graph decomposition in \cref{lem:biconnect:kirchhoff}. Here the subgraph corresponding to the blue edges is the biconnected component and the red edges correspond to the graph $G_1$.}\label{pic:biconnected}
\end{figure}
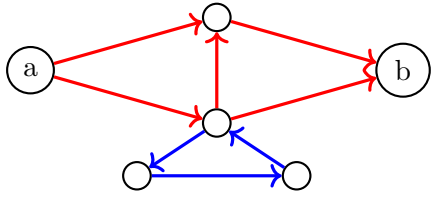

A consequence of \cref{bennyLemma} is that edges with small admittance does not contribute total capacity unless they significantly alter the topology of the graph:

\begin{lemma}[Deletion of edge]
  Let $(G,\bm y)$ be the electrical network as in \cref{bennyLemma}. Let $e \in E$ and define $G' = (V, E \setminus \{e\})$, then it holds
  \begin{align*}
    \frac{T(G';\bm y)}{T(G'/(v_1v_m);\bm y)} \leq \frac{T(G;\bm y)}{T(G/(v_1v_m);\bm y)} \leq \frac{T(G';\bm y)}{T(G'/(v_1v_m);\bm y)} + {\bm y}_e
  \end{align*}
\end{lemma}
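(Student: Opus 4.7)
The plan is to work with the primal variational formulation from \cref{bennyLemma}. Writing
\begin{equation*}
\lambda(G) := \frac{T(G;\bm y)}{T(G/v_1 v_m;\bm y)} = \min_{\bm \varphi_1 = 1,\, \bm \varphi_m = 0} \langle L_G \bm \varphi, \bm \varphi \rangle,
\end{equation*}
and using $L_G = D Y D^T$, the Dirichlet energy splits as a sum of non-negative edge contributions
\begin{equation*}
\langle L_G \bm \varphi, \bm \varphi \rangle = \sum_{e' \in E} \bm y_{e'} (\bm \varphi_{v_-(e')} - \bm \varphi_{v_+(e')})^2,
\end{equation*}
where $v_-(e'), v_+(e')$ denote the endpoints of $e'$. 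In particular, for any admissible $\bm \varphi$ one has the identity
$\langle L_G \bm \varphi, \bm \varphi \rangle = \langle L_{G'} \bm \varphi, \bm \varphi \rangle + \bm y_e (\bm \varphi_{v_-(e)} - \bm \varphi_{v_+(e)})^2$. The two inequalities of the lemma are obtained by testing with the minimizer from the other network.

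For the lower bound, let $\bm \varphi^G$ be the minimizer realising $\lambda(G)$. It is admissible for $\lambda(G')$ (same vertices, same boundary values), and since $\bm y_e \geq 0$ the identity above yields
\begin{equation*}
\lambda(G') \leq \langle L_{G'} \bm \varphi^G, \bm \varphi^G \rangle = \lambda(G) - \bm y_e (\bm \varphi^G_{v_-(e)} - \bm \varphi^G_{v_+(e)})^2 \leq \lambda(G).
\end{equation*}

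For the upper bound, I would take the minimizer $\bm \varphi^{G'}$ of $\lambda(G')$ and use it as a test in the variational problem for $\lambda(G)$, obtaining
\begin{equation*}
\lambda(G) \leq \langle L_G \bm \varphi^{G'}, \bm \varphi^{G'} \rangle = \lambda(G') + \bm y_e (\bm \varphi^{G'}_{v_-(e)} - \bm \varphi^{G'}_{v_+(e)})^2,
\end{equation*}
so the claim reduces to the a priori bound $(\bm \varphi^{G'}_{v_-(e)} - \bm \varphi^{G'}_{v_+(e)})^2 \leq 1$. In the non-degenerate case, where $G'$ still connects $v_1$ to $v_m$, the Euler--Lagrange equation reads $L_{G'} \bm \varphi^{G'} = 0$ at every vertex other than $v_1, v_m$, and the discrete maximum principle for graph-harmonic functions gives $0 \leq \bm \varphi^{G'} \leq 1$, so every edge difference has absolute value at most $1$. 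If on the contrary $e$ is a cut edge separating $v_1$ from $v_m$ in $G$, then $\lambda(G') = 0$ by testing with the indicator of the $v_1$-component of $G'$, and using this same indicator as a test in $G$ gives $\lambda(G) \leq \bm y_e$, because $e$ is the only edge crossing the induced bipartition; the bound is thus again satisfied. This handling of the cut-edge case, together with verifying the discrete maximum principle cleanly, is the only genuine subtlety; otherwise the argument is a one-line perturbation of the Dirichlet energy and requires no ingredient beyond \cref{bennyLemma}.
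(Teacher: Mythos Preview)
Your argument is correct and follows essentially the same approach as the paper's proof: both compare the Dirichlet energies $\langle L_G \bm\varphi,\bm\varphi\rangle$ and $\langle L_{G'} \bm\varphi,\bm\varphi\rangle$ via the single-edge identity and test with the minimizer of the other problem. Your version is in fact more careful, since you explicitly justify the key bound $|\bm\varphi_{v_-(e)}-\bm\varphi_{v_+(e)}|\le 1$ via the discrete maximum principle and separately treat the cut-edge case, whereas the paper simply asserts this bound.
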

\begin{proof}
  Let $Y'$ be the diagonal matrix $Y$ with the entry corresponding to ${\bm y}_e$ replaced by $0$.
  Then we immediately have
  \begin{align*}
    \min_{{\bm \varphi} \in \R^{m}; {\bm \varphi}_1 = 1; {\bm \varphi}_m = 0} \langle D Y' D^T {\bm \varphi}, {\bm \varphi} \rangle \leq \min_{{\bm \varphi} \in \R^{m}; {\bm \varphi}_1 = 1; {\bm \varphi}_m = 0} \langle D Y D^T {\bm \varphi}, {\bm \varphi} \rangle
  \end{align*}
  which proves the first inequality. For the second, note that for any edge $e \in E$, let $v_-,v_+ \in V$ be the incident vertices, then $|\varphi(v_-)-\varphi(v_+)| \leq 1$, hence for any $\bm y$ having each component bounded by $1$ satisfies
  \begin{align*}
    \langle D Y D^T {\bm \varphi}, {\bm \varphi} \rangle \leq  \langle D Y' D^T {\bm \varphi}, {\bm \varphi} \rangle + {\bm y}_e
  \end{align*}
  which proves the last inequality.
\end{proof}

\section{Proof of the main theorem}

The proof of the main theorem consists of an upper and a lower bound of the capacity. The lower bound uses the electrical network defined in \cref{ss:construction} and the variational definition of capacity, similar to the proof in~\cite{BeLaVe}. For simple networks the lower bound follows from the variational characterization of capacity together with the fundamental theorem of calculus.

For the upper bound, we provide a novel proof using ideas from Geometric Function Theory together with Thompson's principle which is in a sense dual to the lower bound. In this case, for simple networks the upper bound follows from Thompson's principle together with the divergence theorem. For the general case we need an alternative construction of the electrical network.

\subsection{Alternative construction of the electrical network}\label{sebsec:construction}

We will construct the alternative electrical network using the domain $U_{\delta/3}$ (see \cref{def:islands}), instead of using the components of $U_{-\delta/3}$ as in \cref{ss:construction}.
To this aim, for a saddle point $z \in Z$, we define the surface
\begin{equation}\label{def:surface}
  S_z  := T_z\left(  \{0 \} \times \{x' \in \R^{n-1}:  G_z(x') < \delta\}\right),
\end{equation}
where $T_z$ is from \cref{def:admissible}.
The set $U_{\delta/3}$ is connected, but the surfaces $S_z$ in~\cref{def:surface}  divide it into different components, which we will associate with vertices, see \cref{f:localization3}. Define
\begin{equation}
  \label{def:omega-delta}
  \Omega_{\delta/3} : = U_{\delta/3}\setminus \bigcup_{z \in Z} S_z.
\end{equation}

We will now provide two technical lemmas. The first says that any path connecting two local minimum points in $U_{-\delta/3}$ necessarily passes through a surface $S_z$ for some $z$ in the set of saddles $Z$, where we recall that $Z$ denotes the saddle points inside $U_{\delta/3} \setminus U_{-\delta/3}$. The second lemma states that $U_{-\delta/3}$ and $ \Omega_{\delta/3}$ have the same number of components and  $ \Omega_{\delta/3}$ defines exactly the same graph $G=(V,E)$ as in \cref{ss:construction}. 

\begin{lemma}\label{vesaTopology1}
  Let $U_{v}$ and $U_{v'}$ be two different components of $U_{-\delta/3}$ and let $\gamma \in  \mathcal{C}(U_{v},U_{v'}; U_{\delta/3})$. Then there is a critical point $z \in Z$ such that the intersection $\gamma([0,1]) \cap S_z$ is non-empty.
\end{lemma}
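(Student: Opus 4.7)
The plan is to argue by contradiction: suppose $\gamma$ avoids every separating surface $S_z$ for $z \in Z$, so that $\gamma \subset \Omega_{\delta/3}$. I first verify that each island sits inside $\Omega_{\delta/3}$: for $p = T_z(0, x') \in S_z$ we have $g_z(0) = 0$, and the structural estimate \cref{eq:struc3} gives
\begin{align*}
  F(p) \geq F(z) + G_z(x') - \omega(G_z(x')) \geq F(z),
\end{align*}
and since $z \in Z \subset U_{\delta/3} \setminus U_{-\delta/3}$ we have $F(z) \geq F(x_u;x_w) - \delta/3$, whence $S_z \cap U_{-\delta/3} = \emptyset$. In particular $U_v, U_{v'} \subset \Omega_{\delta/3}$ and they lie in the same connected component of $\Omega_{\delta/3}$ as $\gamma$; it therefore suffices to prove that any connected component of $\Omega_{\delta/3}$ meets at most one island.

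The second step is a local analysis at each bridge. In $T_z$-coordinates $S_z$ is the slice $\{x_1 = 0\}$ inside $O_{z,\delta}$, so removing it splits the bridge into two disjoint half-bridges $O_{z,\delta}^{\pm}$ indexed by the sign of $x_1$. Using \cref{eq:struc3} together with $F(z) \geq F(x_u;x_w) - \delta/3$, the set $O_{z,\delta} \cap U_{-\delta/3}$ is contained in $\{F < F(z)\} \cap O_{z,\delta}$, which is essentially $\{g_z(x_1) > G_z(x')\}$ up to the error controlled by $\omega$; since $g_z$ is convex with proper minimum at $0$, this set has exactly two connected components, one in each half-bridge. Each such component is attached to a single island, so any connected subset of $\Omega_{\delta/3}$ that enters $O_{z,\delta}$ is confined to one half-bridge, and its intersection with $U_{-\delta/3}$ lies in the unique island adjacent to that half-bridge.

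The last step is the global statement: each connected component of $\Omega_{\delta/3}$ contains at most one island. I would prove this by constructing a map $\pi : \Omega_{\delta/3} \to \{\text{islands}\}$ by following the (generalized) gradient descent of $F$ and recording which island the trajectory eventually enters. All critical points in $U_{\delta/3}$ with value in $[F(x_u;x_w) - \delta/3, F(x_u;x_w) + \delta/3]$ are the saddles in $Z$, each of which is covered by its own bridge; inside a bridge the strict convexity of $g_z$ ensures that $-\partial g_z$ points away from $\{x_1 = 0\}$, so the trajectory never crosses $S_z$; once inside $U_{-\delta/3}$ it remains in the same island. Hence $\pi$ is locally constant, so constant on each connected component of $\Omega_{\delta/3}$, which forces $U_v$ and $U_{v'}$ to be the same island and contradicts $v \neq v'$. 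The main obstacle is to make the gradient descent rigorous under the mild assumption that $F$ is only Lipschitz; this can be handled using Clarke's generalized gradient and a differential inclusion, or more elementarily by a finite-cover argument that directly deforms $\gamma$ piece-by-piece into a path inside $\bigcup_v U_v$, always keeping the deformation within the connected component of $\Omega_{\delta/3}$ containing $\gamma$.
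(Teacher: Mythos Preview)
Your strategy is genuinely different from the paper's, and the core step you defer is exactly the one the paper avoids. The paper does not attempt to show that every component of $\Omega_{\delta/3}$ contains a single island; that is the content of the \emph{next} lemma (\cref{vesaTopology2}), and the paper deduces it \emph{from} the present one, not the other way around. Instead, the paper runs a mountain--pass argument directly on the given curve: it sets
\[
  F_{\gamma_0}=\inf_{\gamma\sim\gamma_0}\ \sup_{t}F(\gamma(t)),
\]
finds a saddle $z$ with $F(z)=F_{\gamma_0}$ together with a homotopic path $\gamma_1$ passing through $z$, checks (via \cref{eq:struc3}) that $\partial S_z$ lies outside $U_{\delta/3}$, and then uses homotopy invariance of the intersection with $S_z$ to conclude that $\gamma_0$ itself meets $S_z$. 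This sidesteps any flow construction entirely and needs no regularity beyond the existence of the min--max critical point.

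Your route can be made to work, but as written it has a real gap at the global step. The gradient--descent map $\pi$ is not well defined for merely Lipschitz $F$: solutions of the differential inclusion $\dot x\in-\partial F(x)$ need not be unique, so ``the trajectory'' is ambiguous, and continuous dependence (needed for $\pi$ to be locally constant) fails in general. You flag this and gesture at Clarke's calculus or an unspecified ``finite-cover deformation'', but neither is carried out, and both are substantially harder than the four-line min--max argument the paper uses. There is also a smaller issue in your second step: the claim that a connected subset of $\Omega_{\delta/3}$ entering $O_{z,\delta}$ is ``confined to one half-bridge'' is a global statement (such a set could leave the bridge and re-enter from the other side via a detour through other parts of $U_{\delta/3}$); the local analysis you give only shows that each connected component of the \emph{intersection} with $O_{z,\delta}$ lies in one half-bridge, which does not by itself yield the confinement without the global step you postpone. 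In short, you are trying to prove \cref{vesaTopology2} first and derive \cref{vesaTopology1} from it, reversing the paper's logic and taking on the harder direction; the paper's min--max approach gets the present lemma cheaply and then uses it to obtain the component statement you are aiming for.
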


\begin{proof}
  W.L.O.G.~we assume $F(x_u;x_w)= 0$. Fix $\gamma_0 \in  \mathcal{C}(U_{v},U_{v'}; U_{\delta/3})$ and denote $\gamma \sim \gamma_0$ when $\gamma$ is homotopy equivalent to $\gamma_0$ in $U_{\delta/3}$. Define
  \begin{equation*}
    F_{\gamma_0}:=  \inf_{\gamma  \sim \gamma_0} \sup_{t \in [0,1]}  \,  F(\gamma(t)).
  \end{equation*}
  Then there is a critical point $z$ of $F$ such that $F(z) = F_{\gamma_0}$ and a continuous path $\gamma_1 \sim \gamma_0$ such that $\gamma_1(t) = z$ for some $t \in (0,1)$.
  We may choose the coordinates in $\R^n$ such that $z = 0$ and $S_z = S_0 =  \{0 \} \times \{x' \in \R^{n-1}:  G(x') < \delta\}$.

  Note that $S_0$ is a convex hypersurface with boundary $\partial S_0 =  \{0 \} \times \{x' \in \R^{n-1}:  G(x') = \delta\}$, and note that $\partial S_0$ is homeomorphic to $\mathbb{S}^{n-2}$.
  Since $F$ is admissible it follows from \cref{eq:struc3} that $F(x)\geq  F(0) + 2\delta/3$ on $x \in \partial S_0$ and therefore  since $F(0) > -\delta/3$ we have $\partial S_0 \subset \R^n \setminus U_{\delta/3}$. In particular,  if  $\gamma $  is a path in $U_{\delta/3}$    then it does not intersect $\partial S_0$, and  if $\gamma \sim \gamma_0 $ then $\gamma$ has to intersect $S_0$.  The claim then follows from  $\gamma_1 \sim \gamma_0$.
\end{proof}

\begin{lemma}\label{vesaTopology2}
  The set $\Omega_{\delta/3} $ defined in~\cref{def:omega-delta} has the same components as $ U_{-\delta/3}$ defined in~\cref{def:U-delta}. To be more precise, if $\Omega'$ is a component of $\Omega_{\delta/3} $ then there is exactly one component, say $U'$, of $U_{-\delta/3}$ such that $U' \subset \Omega'$.
\end{lemma}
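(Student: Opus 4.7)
The plan is to split the lemma into a preliminary inclusion $U_{-\delta/3}\subset\Omega_{\delta/3}$, an at-most-one-island-per-component claim, and an at-least-one-island-per-component claim. First I would establish the inclusion by showing $S_z\cap U_{-\delta/3}=\emptyset$ for every $z\in Z$: on $S_z$ the coordinate $x_1$ vanishes, so $g_z(x_1)=0$, and \cref{eq:struc3} together with $\omega(s)\leq s/100$ for $s\leq 4\delta_0$ gives
\begin{align*}
F\circ T_z(0,x') \ \geq\ F(z) + G_z(x') - \omega(G_z(x')) \ \geq\ F(z) \ \geq\ F(x_u;x_w) - \delta/3,
\end{align*}
where the last inequality uses $z\in Z\subset U_{\delta/3}\setminus U_{-\delta/3}$. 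In particular every island $U_v$, being connected and contained in $\Omega_{\delta/3}$, lies in a unique component of $\Omega_{\delta/3}$.

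For the at-most-one direction I would argue by contradiction: if a component $\Omega'$ of $\Omega_{\delta/3}$ contained two distinct islands $U_v$ and $U_{v'}$, path-connectedness would produce a curve $\gamma\in\mathcal{C}(U_v,U_{v'};\Omega')\subset\mathcal{C}(U_v,U_{v'};U_{\delta/3})$, and \cref{vesaTopology1} would then force $\gamma$ to meet some $S_z$, contradicting $\gamma\subset\Omega_{\delta/3}=U_{\delta/3}\setminus\bigcup_zS_z$.

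The at-least-one direction is the substantive part. For a component $\Omega'$ of $\Omega_{\delta/3}$, by the quadratic growth of $F$ the closure $\overline{\Omega'}$ is compact, and $F$ attains a minimum at some $p_*\in\overline{\Omega'}$. I would rule out the boundary possibilities and conclude $p_*\in\Omega'$. The case $p_*\in\partial U_{\delta/3}$ is immediate, since there $F(p_*)=F(x_u;x_w)+\delta/3$ while $\Omega'$, being nonempty and open in $\{F<F(x_u;x_w)+\delta/3\}$, contains a point where $F$ is strictly less. For the remaining case $p_*\in S_{z_*}$ with $z_*\in Z$, admissibility shows that $F|_{S_{z_*}}$ strictly exceeds $F(z_*)$ away from $z_*$; combined with $z_*\in\overline{\Omega'}$, obtained by connecting $p_*$ along the side of $S_{z_*}$ touched by $\Omega'$ up to a neighborhood of $z_*$, minimality forces $p_*=z_*$, and then \cref{eq:struc3} applied to $q=T_{z_*}(x_1,0)$ with small $x_1\neq 0$ chosen on the $\Omega'$-side gives
\begin{align*}
F(q) \ \leq\ F(z_*) - g_{z_*}(x_1) + \omega(g_{z_*}(x_1)) \ \leq\ F(z_*) - \tfrac{99}{100}g_{z_*}(x_1) \ <\ F(z_*) = F(p_*),
\end{align*}
contradicting minimality. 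Hence $p_*\in\Omega'$ is a proper local minimum of $F$; for $\delta$ sufficiently small (as is implicit in the paper's choice of $\delta_1$, so that no local minimum of $F$ in $U_{\delta/3}$ takes a value $\geq F(x_u;x_w)-\delta/3$) we conclude $p_*\in U_{-\delta/3}$, i.e.\ $p_*$ lies in some island $U_v\subset\Omega'$.

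The main obstacle is the verification of $z_*\in\overline{\Omega'}$ in the subcase $p_*\in S_{z_*}$. It requires a careful local analysis showing that the side of $S_{z_*}$ on which $\Omega'$ accumulates near $p_*$ (say $\{x_1>0\}$) is one connected open piece of $\Omega_{\delta/3}$ that reaches all the way to a neighborhood of $z_*$; keeping $F<F(x_u;x_w)+\delta/3$ along the relevant region relies crucially on the quantitative admissibility bound $\omega(s)\leq s/100$.
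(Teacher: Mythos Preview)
Your inclusion $U_{-\delta/3}\subset\Omega_{\delta/3}$ and your at-most-one argument via \cref{vesaTopology1} coincide with the paper's proof. For the at-least-one direction, the paper in fact gives no argument beyond the bare sentence ``Thus, there is a component $U'$ of $U_{-\delta/3}$ such that $U'\subset\Omega'$''; so your minimization approach is attempting to justify something the paper simply asserts.

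That said, the obstacle you flag is real and is not settled by $\omega(s)\le s/100$. Normalize $F(x_u;x_w)=0$ and write $p_*=T_{z_*}(0,x_*')$. From $F(p_*)\le\delta/3$ and \cref{eq:struc3} you only obtain $G_{z_*}(x_*')\le\tfrac{100}{99}\cdot\tfrac{2\delta}{3}$, so $G_{z_*}(x_*')$ need not be small. On the $\Omega'$-side of $S_{z_*}$ near $p_*$, with $g_{z_*}(x_1)$ small and $x'$ near $x_*'$, \cref{eq:struc3} gives only
\[
F(x_1,x')\ \le\ F(z_*)+G_{z_*}(x')+\omega(g_{z_*}(x_1))+\omega(G_{z_*}(x')),
\]
which can exceed $\delta/3$ whenever $F(p_*)$ is close to $\delta/3$; hence that side region need not lie in $U_{\delta/3}$, and one cannot conclude $z_*\in\overline{\Omega'}$ along this route. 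There is also a second gap at the end: even granting that $p_*\in\Omega'$ is a proper local minimum, the implication $F(p_*)<-\delta/3$ requires that no local minimum in $U_{\delta/3}$ has value in $[-\delta/3,\delta/3)$, and this is not part of the paper's stated hypotheses on $\delta_1$.
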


\begin{proof}
  W.L.O.G.~we assume $F(x_u;x_w)= 0$. Let us fix a component $\Omega'$ of $\Omega_{\delta/3} $.
  Since $F$ is admissible, then for any $z \in Z$, we see from the definition of $S_z$ in~\cref{def:surface} that $F(x) \geq F(z)$ for all  $x \in S_z$, and hence $S_z \cap U_{-\delta/3} = \emptyset$. Thus, there is a component $U'$ of $U_{-\delta/3}$ such that $U' \subset \Omega'$.
  Let us also note that $U'$ is the only component of $U_{-\delta/3}$ which is in $\Omega'$, since if there was another component $U''$ then a curve $\gamma \in \mathcal{C}(U',U''; \Omega') \subset \mathcal{C}(U',U''; U_{\delta/3})$ necessarily intersects one $S_z$ by \cref{vesaTopology1}.
\end{proof}

We will localize the capacity of the sets $A = B_\eps(x_u)$ and $B = B_\eps(x_w)$ in $U_{\delta/3}$ by defining
\begin{equation}\label{def:capa-local}
  \capacity(A,B; U_{\delta/3}) := \inf \left( \eps \int_{U_{\delta/3}} |\nabla h|^2 e^{-\frac{F}{\eps}}\, dx: h=1 \text{ in } A,  h \in W_0^{1,2}(\R^n \setminus B) \right).
\end{equation}
In the above minimization problem we do not have any boundary condition on $\partial U_{\delta/3}$. Thus, it follows from a classical result of calculus of variations (see~\cite[Sec 2.4]{GiaHild}) that the minimizer $\hat h_{A,B}$ of~\cref{def:capa-local} satisfies the natural boundary condition, $\nabla \hat h_{A,B} \cdot n = 0$ on the smooth part of $\partial U_{\delta/3}$.

It is easy to see that for the localization~\cref{def:capa-local} it holds
\begin{equation}
  \label{eq:capa-local}
  \capacity(A,B) \geq   \capacity(A,B; U_{\delta/3}) \geq   (1-C \hat \eta(\eps))\capacity(A,B),
\end{equation}
where $\hat \eta$ is as in \cref{ss:main}.
Indeed, the first inequality in~\cref{eq:capa-local} is trivial. For the second we take $\hat h_{A,B}$ to be the minimizer of~\cref{def:capa-local} and we recall the rough capacity bound from~\cite[Lemma 3.2]{BeLaVe}, i.e., there exists constants $c_1,c_2,q_1,q_2$ such that
\begin{align}\label{eq:rough_cap}
  c_1 \eps^{q_1} e^{-F(x_u;x_w)/\eps} \leq \capacity(A,B) \leq c_2 \eps^{q_2} e^{-F(x_u;x_w)/\eps}.
\end{align}
We choose a cut-off function $0 \leq \zeta \leq 1$ such that $\zeta = 1$ in $U_{\delta/6}$, $\zeta = 0$ outside $U_{\delta/3}$   and $|\nabla \zeta | \leq C$, where $C$ depends on $\delta$ and on the Lipschitz constant of the potential $F$. Then, using Young's inequality, the maximum principle and \cref{eq:rough_cap} we get
\begin{align*}
  \begin{split}
    \capacity(A,B; U_{\delta/3}) &\geq \eps \int_{U_{\delta/3}} |\nabla  h_{A,B}|^2  \zeta^2 e^{-\frac{F}{\eps}}\, dx\\
    &\geq  \frac{\eps}{1+ \eps} \int_{U_{\delta/3}}|\nabla  (\hat h_{A,B} \zeta) |^2  e^{-\frac{F}{\eps}}\, dx -  \frac{2}{\eps} \int_{U_{\delta/3}} |\nabla \zeta|^2 \hat h_{A,B}^2 e^{-\frac{F}{\eps}}\, dx\\
    &\geq  \eps(1-2\eps) \int_{\R^n} |\nabla  (\hat h_{A,B} \zeta) |^2   e^{-\frac{F}{\eps}}\, dx -  \frac{C}{\eps} e^{-\frac{\delta}{6 \eps}} e^{-F(x_u,x_w)/\eps} \\
    &\geq (1 -  C \hat \eta(\eps))  \capacity(A,B),
  \end{split}
\end{align*}
where the last inequality follows from the sub-linearity of $\hat \eta$.

\subsection{Thompson's principle} 
The construction of the network via~\cref{def:omega-delta} is  suitable for the dual definition of the capacity via Thompson's principle. This is done by defining a class of vector fields, denoted by $\mathcal{M}$, where $X \in \mathcal{M}$  if  $X \in W^{1,\infty}(U_{\delta/3} \setminus (\bar A \cup \bar B); \R^n)$ and satisfies
\begin{align}\label{def:def-M} 
  \begin{cases}
    \text{div} X = 0 & \text{in }U_{\delta/3} \setminus (\bar A \cup \bar B),  
    \\
    X \cdot n = 0 & \text{on }\partial U_{\delta/3} 
    \\
    \int_{\partial A} X \cdot n = 1.
  \end{cases}
\end{align}
We note that the set $\mathcal{M}$ is non-empty, since the vector field $X = C e^{-V/\eps} \nabla \hat h_{A,B}$, where $C = (\capacity(A,B; U_{\delta/3}))^{-1}$, belongs to $\mathcal{M}$.
Then we have the following (see e.g.~\cite{LMS})
\begin{equation}\label{def:thompson}
  \frac{1}{\capacity(A,B;U_{\delta/3})}
  = 
  \inf \left( \eps \int_{U_{\delta/3}  \setminus (\bar A \cup \bar B)} |X|^2 e^{\frac{F}{\eps}}\, dx \, : \, X  \in \mathcal{M} \right).
\end{equation}

Let $G=(V,E)$  be the graph constructed as above using the domain $\Omega_{\delta/3}$ defined in~\cref{def:omega-delta} and let $X \in \mathcal{M}$. We construct a current $j :E \to \R $ associated with $X$ as follows.  Let us fix a vertex $v \in V \setminus \{ u,w\}$ and let $\tilde U_v$ be the associated component of the domain $\Omega_{\delta/3} $.  Denote the edges incident with $v$  by $e \in E_v \subset E$ and  the associated surface defined in~\cref{def:surface} by $S_e = S_{z_e}$.  The boundary  $\partial \tilde U_v$ is contained in $\partial U_{\delta/3} \cup (\bigcup_{e \in E_v} S_e)$.
Recall that $v \neq u,w$, therefore  $\text{div} X = 0$ in $\tilde U_v$, and we have by the divergence theorem and by $X \cdot n = 0 $ on  $\partial U_{\delta/3}$ that
\begin{equation}\label{eq:div-thm}
  0 =    -\int_{\tilde U_v} \text{div} (X) \, dx = \int_{\partial \tilde U_v} X \cdot n \, d \mathcal{H}^{n-1} = \sum_{e \in E_v} \int_{S_e} X \cdot n \, d \mathcal{H}^{n-1}.
\end{equation}
We define the value of $j$ at $e \in E_v$ as
\begin{equation}\label{def:current}
  j(e)  := \begin{cases} &\eps \int_{S_e} X \cdot n \, d \mathcal{H}^{n-1}, \,\, \text{if $e$ points into $v$,} \\
  - &\eps \int_{S_e} X \cdot n \, d \mathcal{H}^{n-1}, \,\, \text{if $e$ points out of $v$}.
\end{cases}
\end{equation}
We define the current similarly also at edges incident with  $u$ and $w$. If we label the edges as $e_1, \dots, e_l$ we have  a vector  ${\bm j}\in \R^{|E|}$ which has components ${\bm j}_k = j(e_k)$. By construction and by~\cref{eq:div-thm} ${\bm j}$ satisfies the so-called  Kirchhoff's current law, which means that at every vertex the current flowing in equals the current flowing out. We may write this simply as (see~\cite{Wagner})
\begin{align*}
  D {\bm j} = {\bm \delta}_1 -  {\bm \delta}_m
\end{align*}
where we have  labeled the vertices as $v_1, \dots, v_m$ with $v_1 = u$ and $v_m = w$,  and ${\bm \delta}_1$ and $ {\bm \delta}_m$ are as in \cref{bennyLemma}.

\subsection{Technical lemmas}

Before we prove the main theorem we recall the following  lemma from~\cite{BeLaVe}.
\begin{lemma}\label{l:hab:levelset:rough}
  Let $F$ be admissible. Let $x_u,x_w$ be as in \cref{def:islands} and assume that communication height from \cref{def:islands} is zero, i.e., $F(x_u;x_w) = 0$.
  If $\hat U_v$ is a component of $U_{-\delta/2} = \{ F < -\delta/2\}$, then 
  \begin{align*}
    \underset{\hat U_v}{\osc\,} h_{B_\varepsilon(x_u),B_\varepsilon(x_w)} \leq C  e^{-\frac{3\delta}{16 \eps}} ,
  \end{align*}
  for small enough $\eps \leq \eps_0$.
\end{lemma}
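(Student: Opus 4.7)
The plan is to combine a global weighted Dirichlet energy bound for the capacitary potential $h := h_{B_\varepsilon(x_u),B_\varepsilon(x_w)}$ with a localization to a slightly enlarged island, and then convert an $L^2$-gradient estimate into an $L^\infty$-oscillation estimate via Moser-type local boundedness. Recall that $h$ satisfies $\mathrm{div}(e^{-F/\varepsilon}\nabla h)=0$ weakly away from $\bar B_\varepsilon(x_u)\cup\bar B_\varepsilon(x_w)$ and $0\le h\le 1$. Using $F(x_u;x_w)=0$ together with the rough capacity bound \cref{eq:rough_cap}, the global weighted Dirichlet energy is controlled:
\begin{equation*}
\int_{\R^n}|\nabla h|^2\, e^{-F/\varepsilon}\,dx \;\le\; C\,\varepsilon^{q_2-1}.
\end{equation*}

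Next I enlarge $\hat U_v$ by taking $\hat U_v^+$ to be the component of $\{F<-3\delta/8\}$ containing $\hat U_v$; this is strictly larger than $\hat U_v$ with an $F$-buffer of width $\delta/8$, and is bounded by the quadratic growth of $F$. On $\hat U_v^+$ the weight satisfies $e^{-F/\varepsilon}\ge e^{3\delta/(8\varepsilon)}$, so the global energy estimate localizes to the unweighted inequality
\begin{equation*}
\int_{\hat U_v^+}|\nabla h|^2\,dx \;\le\; C\,\varepsilon^{q_2-1}\,e^{-3\delta/(8\varepsilon)},
\end{equation*}
and Poincar\'e on the fixed domain $\hat U_v^+$ gives $\|h-c\|_{L^2(\hat U_v^+)}^2 \le C\,\varepsilon^{q_2-1}e^{-3\delta/(8\varepsilon)}$ for the average $c=\fint_{\hat U_v^+} h$.

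The final step is to upgrade this $L^2$-bound on $\hat U_v^+$ to an $L^\infty$-bound on the smaller set $\hat U_v$. Applying Moser's local boundedness inequality to $h-c$ on balls of radius $r\sim\varepsilon$ (so that $\mathrm{osc}_{B_r}F\lesssim\varepsilon$ by Lipschitz continuity, which makes the weight $e^{-F/\varepsilon}$ uniformly $A_2$ on that scale) and covering $\hat U_v$ by such balls, one obtains
\begin{equation*}
\underset{\hat U_v}{\osc\,}h \;\le\; 2\sup_{\hat U_v}|h-c| \;\le\; C\,\varepsilon^{-n/2}\|h-c\|_{L^2(\hat U_v^+)} \;\le\; C\,\varepsilon^{(q_2-1-n)/2}\,e^{-3\delta/(16\varepsilon)},
\end{equation*}
and the polynomial pre-factor is absorbed into the constant for $\varepsilon\le\varepsilon_0$ small. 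The buffer between $\partial\hat U_v$ and $\partial\hat U_v^+$ is at least $(\delta/8)/\mathrm{Lip}(F)$, which is $\gg\varepsilon$, so the covering is legitimate.

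The main obstacle is exactly the $L^2\to L^\infty$ step, because a naive De Giorgi--Nash--Moser argument for $\mathrm{div}(e^{-F/\varepsilon}\nabla\cdot)$ has constants that depend on the Muckenhoupt $A_2$-characteristic of the weight and degenerate exponentially as $\varepsilon\to 0$. The resolution is to exploit the Lipschitz bound on $F$: on balls of radius $r=O(\varepsilon)$ the weight varies only by a bounded factor, so the operator is uniformly elliptic on that scale and standard interior regularity applies. The exponent $3/16$ in the statement then has a transparent origin as $\tfrac12\cdot\tfrac38$: the factor $\tfrac38$ comes from working with the enlarged sub-level set $\{F<-3\delta/8\}$, while the factor $\tfrac12$ is the square root introduced by passing from $L^2$ to $L^\infty$.
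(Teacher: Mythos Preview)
Your proposal is correct and follows essentially the same route as the paper: bound the global weighted Dirichlet energy via the rough capacity estimate \cref{eq:rough_cap}, localize to a slightly enlarged sublevel set where the weight gives the exponential factor, apply Poincar\'e, and then use Moser/Harnack on balls of radius $\sim\varepsilon$ (where the weight is uniformly $A_2$) to pass from $L^2$ to $L^\infty$. The only cosmetic difference is that the paper enlarges to the level $\{F<-4\delta/9\}$ rather than your $\{F<-3\delta/8\}$ and explicitly inserts an intermediate Lipschitz domain $D_v$ with $\hat U_v\subset D_v\subset\{F<-4\delta/9\}$ to make the Poincar\'e step clean---a point you should also address, since the raw sublevel-set component $\hat U_v^+$ need not itself be Lipschitz.
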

\begin{proof}
  The proof is almost the same as  in~\cite[Lemma 3.5]{BeLaVe}, but we repeat it for the reader's convenience. Let us denote $u := h_{B_\varepsilon(x_u),B_\varepsilon(x_w)}$ for short. 

  Recall that $\hat U_v$ is a component of $U_{-\delta/2} = \{ F < -\delta/2\}$.  Since $F$ is Lipschitz continuous, we find a Lipschitz domain $D_v$ such that 
  \begin{equation*}
    \hat U_v \subset D_v \subset  U_{-\frac{4\delta}{9}} = \{ F < -\tfrac{4\delta}{9}\}
  \end{equation*}
  and the Poincar\'e inequality holds in  $D_v$ with a constant that depends on $\|F\|_{C^{0,1}}$, i.e., 
  \begin{equation*}
    \int_{D_v} |u - u_{D_v}|^2 \, dx \leq C \int_{D_v} |\nabla u|^2 \, dx,
  \end{equation*}
  where $u_{D_v}$ denotes the average of $u$ in $D_v$.  We use the rough capacity bound \cref{eq:rough_cap} and $D_v \subset  U_{-4\delta/9}$ to deduce
  \begin{align*}
    \int_{D_v} |\nabla u|^2 \, dx &\leq e^{-\frac{4\delta}{9 \eps}}  \int_{D_v} |\nabla u|^2 e^{-\frac{F}{\eps}} \, dx \\
    &\leq \eps^{-1} e^{-\frac{4\delta}{9 \eps}}   \capacity(B_\varepsilon(x_u),B_\varepsilon(x_w))  \leq C \eps^{q-1} e^{-\frac{4\delta}{9 \eps}}.
  \end{align*}
  Fix a point $x_0 \in \hat U_v$. Then by Harnack's inequality~\cite[Lemma 2.7]{BeLaVe}  it holds
  \begin{equation*}
    \sup_{B_{\eps}(x_0)} |u - u_{D_v}| \leq \left( \fint_{2B_{\eps}(x_0)} |u - u_{D_v}|^2 \, dx\right)^{1/2} \leq C \eps^{-\frac{n}{2}}  \left(\int_{D_i} |u - u_{D_v}|^2 \, dx\right)^{1/2}.
  \end{equation*}
  In conclusion, we have (since $\eps \leq \eps_0$)
  \begin{equation*}
    \sup_{B_{\eps}(x_0)} |u - u_{D_v}|  \leq  \eps^{\frac{q-1-n}{2}} e^{-\frac{2\delta}{9 \eps}} \leq C e^{-\frac{3\delta}{16 \eps}} .
  \end{equation*}
  The claim follows from the fact that $x_0$ is arbitrary point in $\hat U_v$.
\end{proof}

We also need the following lemma which relates the function $\hat \eta$ to $\omega$ in the assumption \cref{eq:struc3}. This lemma can  be found in~\cite[Lemma 3.9]{BeLaVe}. 

\begin{lemma}\label{lem:convex}
  Assume that $G: \R^k \to \R$ is a convex function which has a proper minimum at the origin and let $\omega$ be  the increasing function from \cref{eq:struc3}. Then for a fixed  $\delta\leq \delta_0$ and for any $\eps \leq \eps_0$ it holds 
  \begin{equation*}
    (1 -\hat \eta(\eps))  \int_{\R^k} e^{-\frac{G(x)}{\eps}} \, dx\leq  \int_{\{ G < \delta\}} e^{-\frac{G(x)}{\eps}}  e^{\pm \frac{\omega(G(x))}{\eps}} \, dx \leq   (1 +\hat \eta(\eps))  \int_{\R^k} e^{-\frac{G(x)}{\eps}} \, dx,  
  \end{equation*}
for a continuous an increasing function $\hat \eta$ with $\hat \eta(0) = 0$, which depends on $\omega$ and on the dimension.
\end{lemma}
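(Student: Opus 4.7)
The plan is to split the integration region $\{G < \delta\}$ at an intermediate scale $\kappa = \kappa(\eps)$ chosen so that $\omega(\kappa)/\eps \to 0$ while $e^{-\kappa/\eps}/\eps^k \to 0$ as $\eps \to 0$. Such $\kappa$ exists thanks to the hypothesis $\omega(s)/s \to 0$: for instance one may take $\kappa(\eps) := \eps\,|\log \eps|^2$, since writing $\omega(s) \leq \tau(s)\,s$ with $\tau(s) \to 0$ gives $\omega(\kappa)/\eps \leq \tau(\kappa)|\log\eps|^2 \to 0$. This $\kappa$ will determine the error function $\hat\eta$.

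On the near-origin piece $\{G < \kappa\}$, monotonicity of $\omega$ yields $\omega(G(x)) \leq \omega(\kappa)$, so
\[
e^{-\omega(\kappa)/\eps} \int_{\{G<\kappa\}} e^{-G/\eps}\,dx \leq \int_{\{G < \kappa\}} e^{-G/\eps}\,e^{\pm \omega(G)/\eps}\,dx \leq e^{\omega(\kappa)/\eps} \int_{\{G<\kappa\}} e^{-G/\eps}\,dx,
\]
and the prefactors $e^{\pm\omega(\kappa)/\eps}$ equal $1 + o(1)$ by the choice of $\kappa$. For the tail $\{\kappa \leq G < \delta\}$, using $\omega(\delta) \leq \delta/100$ (valid since $\delta \leq \delta_0$) together with $G \geq \kappa$,
\[
\int_{\{\kappa \leq G < \delta\}} e^{-G/\eps}\,e^{\pm \omega(G)/\eps}\,dx \leq e^{-\kappa/\eps + \delta/(100\eps)} \,|\{G < \delta\}|,
\]
which is exponentially small in $\eps$; the volume is finite because convexity combined with the proper minimum at $0$ forces $G$ to grow at least linearly outside any fixed ball.

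The remaining task is to compare these pieces to the normalizer $\int_{\R^k} e^{-G/\eps}\,dx$. Convexity plus the proper-minimum assumption provides a radius $\rho > 0$ and a constant $M > 0$ with $G \leq M$ on $\partial B_\rho$, and then convexity and $G(0) = 0$ yield $G(x) \leq M|x|/\rho$ inside $B_\rho$; hence
\[
\int_{\R^k} e^{-G/\eps}\,dx \geq \int_{B_\rho} e^{-M|x|/(\rho\eps)}\,dx \geq c_k (\rho/M)^k \eps^k
\]
for a dimensional constant $c_k$. Since $e^{-\kappa/\eps}/\eps^k \to 0$, the tail estimate is a vanishing fraction of the normalizer; applying the same idea to $\int_{\R^k \setminus \{G<\kappa\}} e^{-G/\eps}\,dx$ shows $\int_{\{G<\kappa\}} e^{-G/\eps}\,dx = (1+o(1))\int_{\R^k} e^{-G/\eps}\,dx$. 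Combining the three bounds and defining $\hat \eta(\eps)$ as a monotonized majorant of $\omega(\kappa(\eps))/\eps + e^{-\kappa/\eps}/\eps^k$ yields both of the claimed inequalities. The main obstacle is the requirement that $\hat \eta$ depend only on $\omega$ and the dimension $k$; this is delicate because the constants $\rho, M$ above depend a priori on the specific $G$. In the paper's application one resolves this by choosing a common $(\rho, M)$ uniformly across the finite collection of convex profiles $g_z, G_z$ appearing in \cref{def:bridges}.
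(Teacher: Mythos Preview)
The paper does not prove this lemma; it is quoted from \cite[Lemma~3.9]{BeLaVe}, so there is no in-paper argument to compare against. Your proposal, however, has two real gaps.

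First, the tail bound is not ``exponentially small'': with $\kappa = \eps|\log\eps|^2$ the exponent $-\kappa/\eps + \delta/(100\eps) = -|\log\eps|^2 + \delta/(100\eps)$ tends to $+\infty$ because $\delta$ is fixed, so the bound explodes. This is easily repaired by using $\omega(G)\leq G/100$ on $\{G<\delta\}$ (valid since $\delta\leq\delta_0$), which gives $e^{-G/\eps+\omega(G)/\eps}\leq e^{-99G/(100\eps)}\leq e^{-99\kappa/(100\eps)}$. Second, and more seriously, your assertion $\tau(\kappa)\,|\log\eps|^2 \to 0$ fails in general: for $\omega(s)=s/\log(1/s)$ one has $\tau(\kappa)\approx 1/|\log\eps|$ and the product diverges. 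Even after the first repair, comparing the corrected tail bound $e^{-99\kappa/(100\eps)}|\{G<\delta\}|$ against your lower bound $\int e^{-G/\eps}\geq c\,\eps^k$ forces $\kappa/\eps\gg\log(1/\eps)$, and for this $\omega$ no such $\kappa$ can simultaneously satisfy $\omega(\kappa)/\eps\to 0$. The cure is to estimate the tail directly as a fraction of $\int e^{-G/\eps}$ using convexity: the inclusion $(s/t)\{G<t\}\subset\{G<s\}$ for $s\leq t$ gives $|\{G<t\}|\leq (t/s)^k|\{G<s\}|$, and a layer-cake computation then yields
\[
\int_{\{G\geq\kappa\}}e^{-\frac{99G}{100\eps}}\,dx \;\leq\; C_k\Big(\frac{\kappa}{\eps}\Big)^{k} e^{-\frac{99\kappa}{100\eps}}\int_{\R^k}e^{-\frac{G}{\eps}}\,dx,
\]
which is $o(1)$ times the normalizer as soon as $\kappa/\eps\to\infty$. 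A scale with both $\kappa/\eps\to\infty$ and $\omega(\kappa)/\eps\to 0$ always exists (e.g.\ $\kappa=\eps\min\bigl(\eps^{-1/2},\bar\tau(\sqrt\eps)^{-1/2}\bigr)$ with $\bar\tau(s):=\sup_{t\leq s}\omega(t)/t$), and since the refined tail bound carries no $G$-dependent constants it also removes the uniformity concern you raise at the end.
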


\subsection{Proof of the main theorem}\label{sec:mainthm}
We prove the main theorem by providing sharp lower bounds for the variation definition of the capacity and for \cref{def:thompson}, which is in some sense the dual of the argument in~\cite{LMS}.

\begin{proof}[\textbf{Proof of \cref{mainthm}}]
  Consider two local minima $x_u,x_w$, let $A = B_\eps(x_u)$ and $B = B_\eps(x_w)$, and let $h_{A,B}$ be the capacitary potential for the capacitor $(A,B)$. By rescaling we may assume that communication height from \cref{def:islands} is zero, i.e., $F(x_u;x_w)= 0$.

  \paragraph{\bf Lower bound:}
  Let $(G,\bm y)$ be the electrical network from \cref{ss:construction}, and label the vertices as $V = \{v_1,\ldots,v_m\}$, where $v_1 = u$, $v_m = w$.
  We need to show that
  \begin{equation*}
    \capacity(A,B) \geq (1 - C \hat \eta(\eps)) \frac{T(G;{\bm y})}{T(G/uw;{\bm y})},
  \end{equation*}
  where $\hat \eta$ is as in \cref{ss:main}.

  Let $\varphi:V \to \R$ be a function such that $\varphi(v) = h_{A,B}(x_v)$ where $v \in V$ and $x_v$ is the associated minimum point. Let $\hat U_v$ be the component of $\{ F < -\delta/2\}$ which contains $x_v$. By \cref{l:hab:levelset:rough} we have
  \begin{align*}
    \text{osc}_{\hat U_v}(h_{A,B}) \leq C  e^{-\frac{3\delta}{16 \eps}} \quad \text{for all $v \in V$}.
  \end{align*}
  Therefore, $\varphi$ satisfies
  \begin{align}\label{eq:vertex:approx}
    | h_{A,B}  - \varphi(v) | \leq   C  e^{-\frac{3\delta}{16 \eps}} \quad  \text{in } \,\hat U_{v} \quad \text{for $v \in V$.}
  \end{align}

  Consider an edge $e \in E$, which is not a loop, and let $v_{-},v_{+}$ be the two incident vertices in $V$. Denote the associated minimum points as $x_{-},x_{+}$, the associated islands as $U_{-},U_{+}$ respectively and the saddle point as $z_{e}$.
  We may assume that $z_e = 0$  and that the bridge is given by
  \begin{align*}
    O_{e} = O_{z_e,  \delta} = \{y_1:  g(y_1) <  \delta \} \times \{y':  G(y') <  \delta \}.
  \end{align*}
  Let us consider a domain (see \cref{f:localization2})
  \begin{equation*}
  \hat O_{e}  := \{y_1:  g(y_1) <  \delta \} \times \{y':  G(y') <  \delta/100 \}
  \end{equation*}
  and denote for $\tau \geq 0$ the surface 
  \begin{equation*}
    S_\tau :=  \{\tau\} \times \{y' :  G(y') < \delta/100\}.
  \end{equation*}

  \begin{figure}
    \begin{center}
      \begin{tikzpicture}
        \draw (-2,-2) .. controls (0,-1) and (0,1) .. (-2,2);
        \draw (-2,1) node {$\hat U_{v_-}$};
        \draw (2,-2) .. controls (0,-1) and (0,1) .. (2,2);
        \draw (2,1) node {$\hat U_{v_+}$};

        \filldraw[color=gray!60, fill=gray!30] (-2,0) circle (0.5);
        \draw (-2,0) node {$v_-$};

        \filldraw[color=gray!60, fill=gray!30] (2,0) circle (0.5);
        \draw (2,0) node {$v_+$};

        \draw[pattern=north west lines, pattern color=blue] (-0.8,-1.3) rectangle (0.8,1.3);
        \draw[pattern=north east lines, pattern color=blue] (-0.8,-0.4) rectangle (0.8,0.4);
        \draw (0,-1.7) node {$O_e$};

        \draw[pattern=north west lines, pattern color=blue] (-4,2) rectangle (-3.7,2.3);
        \draw (-4+0.6,2+0.1) node {$O_e$};

        \draw[pattern=north west lines, pattern color=blue] (-4,2-0.6) rectangle (-3.7,2.3-0.6);
        \draw[pattern=north east lines, pattern color=blue] (-4,2-0.6) rectangle (-3.7,2.3-0.6);
        \draw (-4+0.6,2+0.1-0.6) node {$\hat O_e$};

        \draw[pattern=north east lines, pattern color=blue] (-0.8,-0.4) rectangle (0.8,0.4);
      \end{tikzpicture}
    \end{center}
    \caption{The bridge $O_e$ connects the sets $\hat U_{v_+}$ and $\hat U_{v_-}$. The smaller cylindrical bridge $\hat O_e$ has its lateral boundaries inside $\hat U_{v_+} \cup \hat U_{v_-}$.}\label{f:localization2}
  \end{figure}
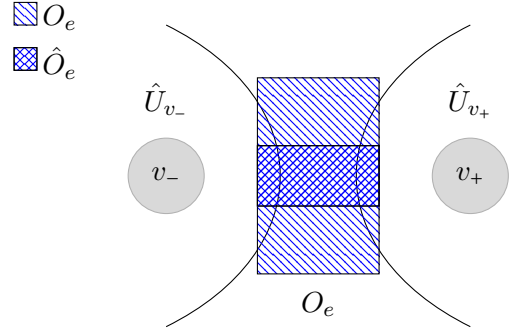

  We denote the lateral boundary of $ \hat O_{e} $  by $\Gamma_e :=  \{y_1:  g(y_1) =  \delta \} \times \{y':  G(y') <  \delta/100 \}$ and note that $\Gamma_e = S_{\tau_1}\cup S_{\tau_2}$ for $\tau_1<0 < \tau_2$ which satisfy $g(\tau_1) = g(\tau_2)= \delta$. Recall that we assume $F(z_e) = F(0)< \delta/3$ and therefore  by the definition of $\hat O_{e}$ and assumption \cref{eq:struc3} it holds  for all $y \in \Gamma_e$ that 
  \begin{equation} \label{eq:sides-estimate}
    \begin{split}
      F(y) \leq \overbrace{F(0)}^{<\delta/3} -  \overbrace{g(y_1)}^{=\delta} + \overbrace{G(y')}^{<\delta/100} + \overbrace{\omega(g(y_1))}^{<\delta/100} + \overbrace{\omega(G(y'))}^{<\delta/100}   < - \frac{\delta}{2}.
    \end{split}
  \end{equation}
  In other words, the lateral boundary $\Gamma_e$ is contained in the sublevel-set $\{ F < -\delta/2\} $ and the inequality in \cref{eq:vertex:approx} holds there.  

  Let us next prove that it holds
  \begin{align}\label{eq:gradient:bound}
    (1 - C \hat \eta(\eps)) (\varphi(v_{-})-\varphi(v_{+}))^2 {\bm y}_e \leq \eps\int_{O_{e}} |\nabla h_{A,B}|^2 e^{-\frac{F(y)}{\eps}} \,dy + C  e^{-\frac{\delta}{24 \eps}},
  \end{align}
  where the admittance $ {\bm y}_e$ is defined in~\cref{eq:geometric}. To this aim we fix $y' \in \{y':  G(y') <  \delta/100 \}$, let $\tau_1<0 < \tau_2$ be such that $g(\tau_1) = g(\tau_2)= \delta$ and notice that $( \tau_i, y') \in \Gamma_e$, for $i =1,2$. Using the fundamental theorem of calculus and \cref{eq:vertex:approx} we get
  \begin{align*}
    |\varphi(v_-) - \varphi(v_+)|-C  e^{-\frac{3\delta}{16 \eps}} &\leq |h_{A,B}(\tau_2, y') - h_{A,B}(\tau_1, y')| \\
    &\leq \int_{\{g < \delta\}} |\partial_{y_1} h_{A,B}(y_1,y')| dy_1 
    \\
    &= \int_{\{g < \delta\}} |\nabla h_{A,B}(y_1,y')| e^{-\frac{F(y)}{2\eps}} e^{\frac{F(y)}{2\eps}}dy_1.
  \end{align*}
  By Cauchy-Schwarz inequality we have
  \begin{align*}
    \begin{split}
      (\varphi(v_{-})-\varphi(v_{+}))^2- 2C e^{-\frac{3\delta}{8 \eps}} \leq \left( \int_{\{g<\delta\} } |\nabla h_{A,B}(y)|^2 e^{-\frac{F(y)}{\eps}} \,dy_1 \right) \left(  \int_{\{g<\delta\} }  e^{\frac{F(y)}{\eps}} \,dy_1\right)
    \end{split}
  \end{align*}
  for $ (y_1,y') \in \{ g <\delta\} \times \{ G <\delta/100\}$.  The assumption \cref{eq:struc3} implies
  \begin{equation*}
    F(y) \leq F(0) - g(y_1)  + \omega(g(y_1)) + G(y') + \omega(G(y')).
  \end{equation*}
  Dividing the above estimate by $e^{\frac{G(y')}{\eps}}e^{\frac{\omega(G(y'))}{\eps}} $ and integrating over $y'$  yields
  \begin{align*}
    \begin{split}
      &\left(\int_{\{G < \delta/100\}} e^{-\frac{G(y')}{\eps}}e^{-\frac{\omega(G(y'))}{\eps}}   dy' \right)
      \big((\varphi(v_{-})-\varphi(v_{+}))^2- 2C e^{-\frac{3\delta}{8 \eps}} \big) \\
&\leq\left( \int_{ \hat O_{e} } |\nabla h_{A,B}(y)|^2 e^{-\frac{F(y)}{\eps}} \,dy \right)\left(\int_{\{g<\delta\} }  e^{-\frac{g(y_1)}{\eps}} e^{\frac{\omega(g(y_1))}{\eps}} \,dy_1\right)e^{\frac{F(0)}{\eps}}.
    \end{split}
  \end{align*}
  Using \cref{lem:convex,lem:geometric-d} it holds
  \begin{equation*}
    \begin{split}
      e^{\frac{F(0)}{\eps}} \, \int_{\{g<\delta\} }  e^{\frac{-g(y_1)}{\eps}} e^{\frac{\omega(g(y_1))}{\eps}} \,dy_1 
      &\leq 
      (1 + \hat \eta(\eps)) e^{\frac{F(0)}{\eps}}  \int_{\R }  e^{\frac{-g(y_1)}{\eps}} \,dy_1 \\
      &\leq  
      (1 + C \hat \eta(\eps)) \, d_\eps(B_\eps(x_{-}),B_\eps(x_{+});\Omega_e),
    \end{split}
  \end{equation*}
  and, trivially
  \begin{align*}
    \int_{\{g < \delta\}} e^{\frac{-g(y_1)}{\eps}} e^{\frac{\omega(g(y_1))}{\eps}} dy_1 
    \geq 
    \int_{\{g < \eps\}} e^{\frac{-\eps}{2 \eps}} dy_1 
    \geq
    c |\{g < \eps\}|.
  \end{align*}
  Since $g$ is Lipschitz and $g(0)= 0$ we have  $(-c\eps,c\eps) \subset \{g < \eps\}$ for some $c$, and therefore $|\{g < \eps\}| \geq c \eps$.
  Again, by \cref{lem:convex,lem:geometric-d} we get
  \begin{equation*}
    \begin{split}
      e^{-\frac{F(0)}{\eps}} \, \int_{\{G < \delta/100\}} e^{-\frac{G(y')}{\eps}}e^{-\frac{\omega(G(y'))}{\eps}}   dy' 
      &\geq 
      (1 - \hat \eta(\eps))e^{-\frac{F(0)}{\eps}} \int_{\R^{n-1} }  e^{\frac{-G(y')}{\eps}} \,dy' \\
      &\geq  
      (1- C\hat \eta(\eps)) \, V_\eps(B_\eps(x_{-}),B_\eps(x_{+});\Omega_e),
      \end{split}
  \end{equation*}
  and trivially we also get
  \begin{align*}
    \int_{\{G < \delta/100\}} e^{\frac{-G(y')}{\eps}} e^{\frac{- \omega(G(y'))}{\eps}} dy' \leq |\{G < \delta\}|.
  \end{align*}
  Recalling that $F(0) \leq \delta/3$, this together with the above estimates and the definition of the admittance ${\bm y}_e$ \eqref{eq:geometric} imply the inequality~\cref{eq:gradient:bound}.

  Since \cref{eq:gradient:bound} holds for all $e \in E$ we can sum the inequalities over $e$ and rephrase the sum using the signed incidence matrix $D$ and the admittance matrix $Y$. To this aim, let $\bm \varphi$ be the vector $(\varphi(v_1),\ldots,\varphi(v_m))$, where $v_1 = u$ and $v_m = w$, and for an edge $e \in E$, let $v_{e^-},v_{e^+}$ be the incident vertices. Then since $D$ is the $|V |\times |E|$ signed incidence matrix, we have  for the edges $(e_1,\ldots,e_k)$
  \begin{align*}
    D^T {\bm \varphi} 
    = 
    (\varphi(v_{e^+_1})-\varphi(v_{e^-_1}),\ldots,\varphi(v_{e^+_k})-\varphi(v_{e^-_k})).
  \end{align*}
  Furthermore, by the definition of the admittance matrix $Y$ we have that
  \begin{align*}
    Y D^T {\bm \varphi} 
    = 
    ((\varphi(v_{e^+_1})-\varphi(v_{e^-_1}))y_{e_1},\ldots,(\varphi(v_{e^+_k})-\varphi(v_{e^-_k}))y_{e_k}).
  \end{align*}
  Recalling that \cref{eq:gradient:bound} holds for every edge $e \in E$, we get since sets $O_{e_i}$ are disjoint, that
  \begin{align*}
    (1 - C \hat \eta(\eps)) \langle DYD^T {\bm \varphi}, {\bm \varphi} \rangle \leq & 
    \eps\int_{\R^n} |\nabla h_{A,B}|^2 e^{-\frac{F(y)}{\eps}} \,dy + C e^{-\frac{\delta}{24 \eps}} 
    \\
    \leq &
    \capacity(A,B) + C e^{-\frac{\delta}{24 \eps}}.
  \end{align*}
  Now note that, the rough capacity bound \cref{eq:rough_cap} implies that $\capacity(A,B) \geq c_1 \eps^{q_1}$. By construction, it holds ${\bm \varphi}_1 = \varphi(u) = 1$ and  ${\bm \varphi}_m = \varphi(w) = 0$, therefore \cref{bennyLemma} completes the proof of the lower bound.

  \paragraph{\bf Upper bound:}

  We prove the upper bound by a similar argument by providing a lower bound in the dual characterization~\cref{def:thompson}. Indeed, by the second inequality in~\cref{eq:capa-local} this provides an upper bound for the global capacity.  Let us fix a vector field $X \in \mathcal{M}$, where $\mathcal{M}$ is defined via conditions~\cref{def:def-M}, and construct the associated current  ${\bm j} \in \R^{|E|}$ as in  \cref{sebsec:construction}. The construction implies that ${\bm j} $  satisfies Kirchhoff's current law  $D {\bm j} = {\bm \delta}_1 -  {\bm \delta}_m$, and therefore it holds by  \cref{bennyLemma} and \cref{lem:dual-net} that
  \begin{align*}
    \langle Y^{-1} {\bm j} , {\bm j} \rangle \geq \frac{T(G/uw;{\bm y})}{T(G;{\bm y})}.
  \end{align*}

  In order to conclude the proof, it is enough to show that at every edge $e \in E$ it holds
  \begin{equation}\label{eq:mainthm-upper}
    \eps \int_{O_e \cap U_{\delta/3}} |X|^2 e^{\frac{F}{\eps}}\, dx \geq (1 -  C \hat \eta(\eps)) \frac{{\bm j}_e^2}{{\bm y}_e},
  \end{equation}
  where $O_{e} = O_{z_e, \delta}$ denotes the associated bridge. To this aim we may choose the coordinates in $\R^n$ such that
  \begin{align*}
    O_e = \{x_1 :  g(x_1) < \delta\} \times \{x' :  G(x') < \delta\}.
  \end{align*}
  For every $|\tau|< \delta/100$  redefine 
  \begin{equation}\label{eq:S-tau}
    S_\tau := \{\tau\} \times \{x' :  G(x') \leq \delta\},
  \end{equation}
  and note that by the definition of ${\bm j}$ in~\cref{def:current}  it holds
  \begin{equation}\label{eq:from-current}
    \eps  \, \left |\int_{S_{0}} X \cdot \hat e_1 \,  d \mathcal{H}^{n-1} \right|  = |{\bm j}_e|,
  \end{equation}
  where $\hat e_1$ is the first coordinate vector of $\R^n$.  Let us fix $0< \tau < \delta/100$  and consider the  domain (see \cref{f:localization3})
  \begin{align*}
    \hat O_\tau = \{x_1: 0 < g(x_1) < \tau \} \times \{x' :  G(x') < \delta\}
  \end{align*}
  and denote the `cylindrical' boundary by 
  \begin{align*}
    \Sigma_\tau =  \{x_1 :  0 \leq g(x_1) \leq \tau \} \times \{x' :  G(x') = \delta\}.
  \end{align*}
  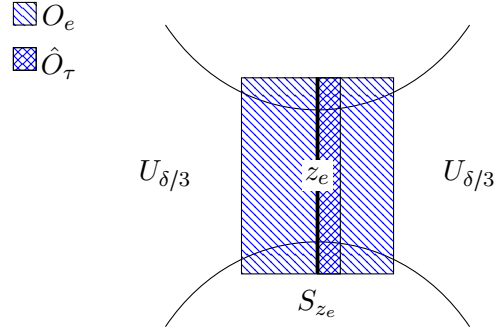
\begin{figure}
    \begin{center}
      \begin{tikzpicture}
        \draw (-2,-2) .. controls (-1,-0.5) and (1,-0.5) .. (2,-2);

        \draw (-2,2) .. controls (-1,0.5) and (1,0.5) .. (2,2);
  
        \draw (-2,0) node {$U_{\delta/3}$};
        \draw (2,0) node {$U_{\delta/3}$};
  
        \draw[pattern=north west lines, pattern color=blue] (-1,-1.3) rectangle (1,1.3);
        \draw[pattern=north east lines, pattern color=blue] (0,-1.3) rectangle (0.3,1.3);
        \draw[line width=0.5mm] (0,-1.3) to (0,1.3);
        \draw (0,-1.7) node {$S_{z_e}$};
  
        \draw[pattern=north west lines, pattern color=blue] (-4,2) rectangle (-3.7,2.3);
        \draw (-4+0.6,2+0.1) node {$O_e$};
  
        \draw[pattern=north west lines, pattern color=blue] (-4,2-0.6) rectangle (-3.7,2.3-0.6);
        \draw[pattern=north east lines, pattern color=blue] (-4,2-0.6) rectangle (-3.7,2.3-0.6);
        \draw (-4+0.6,2+0.1-0.6) node {$\hat O_\tau$};

        \fill[white] (-0.2,-0.2) rectangle (0.2,0.2);
        \draw (0,0) node {$z_e$};
      \end{tikzpicture}
    \end{center}
    \caption{The set $S_{z_e}$ and the domain $\hat O_\tau$ separate the domain $U_{\delta/3}$ into different components.}\label{f:localization3}
  \end{figure}
  Arguing as in~\cref{eq:sides-estimate} we deduce that $F > \delta/3$ on $\Sigma_\tau$ and therefore  $\Sigma_\tau \subset (\overline U_{\delta/3})^c$. Note that the `lateral' boundary of $ \hat O_\tau$ is the union of $S_0$ and $S_\tau$ defined in \cref{eq:S-tau}.   By~\cref{def:def-M} $X$ is divergence free, and thus we obtain by the divergence theorem that
  \begin{align*}
    0 
    =& 
    \int_{\hat O_e \cap  U_{\delta/3}}  \text{div} (X)  \, dx 
    \\
    =& 
    \int_{\partial U_{\delta/3} \cap \hat O_e}  X \cdot n   \, d \mathcal{H}^{n-1} + \int_{S_0}  X \cdot n   \, d \mathcal{H}^{n-1} + \int_{S_\tau}  X \cdot n   \, d \mathcal{H}^{n-1} .
  \end{align*}
  Again by~\cref{def:def-M}  we have $X \cdot n = 0$ on $\partial U_{\delta/3}$,  and since the normal on  the lateral boundary,  $S_0$ and $S_\tau$, points in direction of $\hat e_1$, we have  by \cref{eq:from-current} 
  \begin{align*}
    \eps \, \left |\int_{S_{\tau}} X \cdot \hat e_1 \,  d \mathcal{H}^{n-1} \right | = \eps \, \left |\int_{S_{0}} X \cdot \hat e_1 \,  d \mathcal{H}^{n-1} \right| = |{\bm j}_e|.
  \end{align*}
  We may apply the same argument to $\tau <0$ to deduce the above equality for all $|\tau|< \delta/100$.  

  We proceed by the Cauchy-Schwarz inequality 
  \begin{align*}
    |{\bm j}_e| = \eps \, \left |\int_{S_{\tau}} X \cdot \hat e_1 \,  d \mathcal{H}^{n-1} \right | \leq  \eps \, \left(  \int_{S_{\tau}} |X|^2 e^{\frac{F}{\eps}} \,  d \mathcal{H}^{n-1} \right)^{\frac12} \left( \int_{S_{\tau}}  e^{-\frac{F}{\eps}}\,  d \mathcal{H}^{n-1}\right)^{\frac12}.
  \end{align*}
  By assumption \cref{eq:struc3} we have
  \begin{align*}
    F(y) \geq F(0) - g(y_1) - \omega(g(y_1)) + G(y') - \omega(G(y'))
  \end{align*}
  thus by \cref{lem:convex,lem:geometric-d} it holds
  \begin{equation*}
    \begin{split}
      \int_{S_{\tau}}  e^{-\frac{F}{\eps}}\,  d \mathcal{H}^{n-1} &\leq  e^{\frac{g(\tau)}{\eps} } e^{\frac{\omega(g(\tau))}{\eps} } e^{-\frac{F(0)}{\eps}} \int_{\{ G <\delta\}} e^{-\frac{G(x')}{\eps} } e^{\frac{\omega(G(x'))}{\eps} }  \, dx' \\
      &\leq (1+   \hat \eta(\eps)) e^{\frac{g(\tau)}{\eps} } e^{\frac{\omega(g(\tau))}{\eps} } e^{-\frac{F(0)}{\eps}} \int_{\R^{n-1}} e^{-\frac{G(x')}{\eps} }  \, dx' \\
      &\leq (1+  C \hat \eta(\eps))e^{\frac{g(\tau)}{\eps} } e^{\frac{\omega(g(\tau))}{\eps} }V_\eps(B_\eps(x_{-}),B_\eps(x_{+});\Omega_e).
    \end{split}
  \end{equation*}
  Hence, by the three previous inequalities we have
  \begin{align*}
    \frac{{\bm j}_e^2}{V_\eps(B_\eps(x_{-}),B_\eps(x_{+});\Omega_e)}  e^{-\frac{g(\tau)}{\eps} } e^{-\frac{\omega(g(\tau))}{\eps} }\leq  (1+ C\hat \eta(\eps)) \eps^2 \int_{S_{\tau}} |X|^2 e^{\frac{F}{\eps}} \,  d \mathcal{H}^{n-1}.
  \end{align*}
  Integrating over $\tau \in (-\delta/100,\delta/100)$ and using \cref{lem:convex} and  \cref{lem:geometric-d} we get
  \begin{align*}
    {\bm j}_e^2 e^{-\frac{F(0)}{\eps}} \frac{d_\eps(B_\eps(x_{-}),B_\eps(x_{+});\Omega_e)}{V_\eps(B_\eps(x_{-}),B_\eps(x_{+});\Omega_e)} \leq (1+  C \hat \eta(\eps)) \eps^2  \int_{O_e} |X|^2 e^{\frac{F}{\eps}}\, dx.
  \end{align*}
  Inequality~\cref{eq:mainthm-upper} then follows from the definition of ${\bm y}_e$ in~\cref{eq:geometric}.
\end{proof}

\section*{Acknowledgments}
B.A. was supported by the Swedish Research Council dnr: 2019--04098.
V.J. was  supported by the Academy of Finland grant 314227.


\begin{thebibliography}{10}

  \bibitem{BeLaVe}
  {\sc Avelin, B., Julin, V., and Viitasaari, L.}
  \newblock{}\emph{Geometric characterization of the Eyring-Kramers formula},
  Communications in Mathematical Physics, 404 (2023): 401--437.

  \bibitem{BerGentz} {\sc N. Berglund, and B. Gentz. }
  \newblock{}\emph{The Eyring-Kramers law for potentials with nonquadratic saddles}, 
  \newblock{}Markov Processes and Related Fields 16.3 (2010): 549--598.

  \bibitem{Bollobas}
  {\sc Bollob\'{a}s, B.}
  \newblock{}{\em Modern graph theory}, vol.~184 of {\em Graduate Texts in
  Mathematics}.
  \newblock{}Springer-Verlag, New York, 1998.

  \bibitem{BGKBook}
  {\sc Bovier, A., and den Hollander, F.}
  \newblock{}{\em Metastability}, vol.~351 of {\em Grundlehren der mathematischen
  Wissenschaften [Fundamental Principles of Mathematical Sciences]}.
  \newblock{}Springer, Cham, 2015.
  \newblock{}A potential-theoretic approach.

  \bibitem{BGK}
  {\sc Bovier, A., Eckhoff, M., Gayrard, V., and Klein, M.}
  \newblock{}Metastability in reversible diffusion processes. {I}. {S}harp
  asymptotics for capacities and exit times.
  \newblock{}{\em J. Eur. Math. Soc. (JEMS) 6}, 4 (2004), 399--424.

  \bibitem{GiaHild}
  {\sc Giaquinta, M., and Hildebrandt, S.}
  \newblock{}{\em Calculus of variations. I. The Lagrangian formalism.}
  {\em Grundlehren der mathematischen Wissenschaften [Fundamental Principles of Mathematical Sciences]}, 310. 
  \newblock{}Springer-Verlag, Berlin, 1996.

  \bibitem{Eyr}
  {\sc Eyring, H.}
  \newblock{}The activated complex in chemical reactions.
  \newblock{}{\em The Journal of Chemical Physics 3}, 2 (1935), 107--115.

  \bibitem{FW}
  {\sc Freidlin, M.~I., and Wentzell, A.~D.}
  \newblock{}Random perturbations.
  \newblock{}In {\em Random perturbations of dynamical systems}. Springer, 1998,
  pp.~15--43.

  \bibitem{G}
  {\sc Gehring, F.}
  \newblock{}Extremal length definitions for the conformal capacity of rings in
  space.
  \newblock{}{\em Michigan Mathematical Journal 9}, 2 (1962), 137--150.

  \bibitem{Kirchhoff}
  {\sc Kirchhoff, G.}
  \newblock{}{\"U}ber die aufl{\"o}sung der gleichungen, auf welche man bei der
  untersuchung der linearen vertheilung galvanischer str{\"o}me gef{\"u}hrt
  wird.
  \newblock{}{\em Annalen der Physik 148}, 12 (1847), 497--508.

  \bibitem{Kra}
  {\sc Kramers, H.~A.}
  \newblock{}Brownian motion in a field of force and the diffusion model of
  chemical reactions.
  \newblock{}{\em Physica 7}, 4 (1940), 284--304.

  \bibitem{LMS}
  {\sc Landim, C., Mariani, M., and Seo, I.}
  \newblock{}{Dirichlet{\textquoteright}s and Thomson{\textquoteright}s Principles
  for Non-selfadjoint Elliptic Operators with Application to Non-reversible
  Metastable Diffusion Processes}.
  \newblock{}{\em Archive for Rational Mechanics and Analysis 231}, 2 (2019),
  887--938.

  \bibitem{Levin}
  {\sc Levin, D. A., and Peres, Y. }
  \newblock{}Markov chains and mixing times (Vol. 107). American Mathematical Soc.. (2017).

  \bibitem{Michel}
  {\sc Michel, L.}
  \newblock{}{About small eigenvalues of the Witten Laplacian.}
  \newblock{}{\em Pure and Applied Analysis 1.2 (2019): 149--206.}

  \bibitem{Wagner}
  {\sc Wagner, D.~G.}
  \newblock{}Combinatorics of electrical networks.
  \newblock{}{\em Lecture Notes\/} (2009).

\end{thebibliography}
\end{document}